\documentclass[12pt]{article}

\usepackage{amsfonts}
\usepackage{amssymb}
\usepackage{amsmath}
\usepackage{algorithm}
\usepackage{algorithmicx}
\usepackage{algcompatible}
\usepackage{graphicx}
\usepackage{longtable}
\usepackage{float}
\usepackage{caption}
\usepackage{subcaption}
\usepackage[utf8]{inputenc}
\usepackage{amsthm}
\usepackage{authblk}
\usepackage{booktabs}
\usepackage{longtable}
\usepackage{pdflscape}
\usepackage{xcolor}

\usepackage{fullpage}

\newtheorem{theorem}{Theorem}
\newtheorem{lemma}[theorem]{Lemma}
\newtheorem{remark}[theorem]{Remark}

\newtheorem{observation}[theorem]{Observation}

\usepackage{hyperref}
\usepackage{tikz}
\usetikzlibrary{arrows,backgrounds}
\usepgflibrary{shapes.multipart}

\begin{document}

\title{Approximate policy iteration using neural networks for storage problems}

\author[1]{Trivikram Dokka\thanks{Email: t.dokka@lancaster.ac.uk}}
\author[2]{Richlove Frimpong\thanks{Email: R.A.Frimpong@lboro.ac.uk}}

\affil[1]{Department of Management Science, Lancaster University, United Kingdom}
\affil[2]{Management Science and Operations, Loughborough University, United Kingdom}

\date{}

\maketitle

\abstract{We consider the stochastic single node energy storage problem (SNES) and revisit Approximate Policy Iteration (API) to solve SNES. We show that the performance of API can be boosted by using neural networks as an approximation architecture at the policy evaluation stage. To achieve this, we use a model different to that in literature with aggregate variables reducing the dimensionality of the decision vector, which in turn makes it viable to use neural network predictions in the policy improvement stage. We show that performance improvement by neural networks is even more significant in the case when charging efficiency of storage systems is low.
}

\smallskip

\textbf{Keywords: } policy iteration; neural networks; energy storage 

\section{Introduction}
The continuous prevalence of the SMART grid as the next-generation power grid that enables the use of advanced technologies, equipment and controls to deliver electricity more reliably and efficiently has caused a rapid change in the generation and consumption of electricity. Besides this, the drive to reduce $CO_2$ emissions from the electricity supply has resulted in an increase in the integration of renewable energy sources such as  wind and solar in the generation of electricity. This is evidenced by \cite{ofgem} as a quarter of the total electricity generation in the UK in 2017 were provided by renewable energy sources compared to 5\% in 2016. However, due to their high volatility and intermittency, renewable energy sources are often paired with energy storage devices like batteries to increase the value of the energy generated and optimize existing grid connections. Battery storage provides many benefits such as peak shaving, time shifting, electricity price arbitrage,  balancing costs, provision of operating reserve and reduction of curtailment \cite{eyer1, eyer2, zhou1}. 
The combination of smart metering systems, smart grids, decentralised
renewable generation and energy storage systems provide end-users the opportunity to generate electricity as well as monitor and control their consumption in order to reduce their electricity bills.  To realize the full potential of integrating renewable energy sources with battery storage systems, there is the need to determine the best possible energy allocation decisions through the optimization of the control policy. In other words, one should be able to decide when to store, release, buy and sell energy in order to maximize profit. Therefore, researchers from fields such as Operations Research, Computer Science and Electrical Engineering have been paying more attention to proposing solutions and recommendations to the problem of optimally managing energy from different sources such as renewable energy connected to the grid, batteries, households, consumers, prosumers or some other form of energy sink \cite{7007629, 7010626, lohndorf1, xi1, zhou2}. In this work we consider the single-node energy storage (SNES) problem as defined by \cite{halman1}. Informally, SNES is when there is a single energy-producing node in the smart grid that wishes to maximize their own objective without taking into account the goals of the system operator. 

The problem is connected to classical inventory optimization problems. Optimization with storage decisions appear in other contexts like commodity storage and currency trading. Even though all these problems can be grouped into a similar class, each of them have their own unique characteristics which impact the hardness of the problem when solving them. SNES can be modeled as a stochastic dynamic program and many studies in literature (see Section \ref{lit_review}) propose a variety of approaches to solve the resulting hard dynamic programs. Among these are the popular Approximate dynamic programming (ADP) approaches. One such technique within ADP is the Approximate policy iteration (API) which is often used to solve large stochastic dynamic programs. There are many variants of API which prove useful depending on the specific problem being solved. In this work we investigate a novel API algorithm which employs neural networks to approximately solve the SNES problem. To the best of our knowledge we are only aware of these two papers \cite{Han_et_al2016,liu1}, which combine neural network within API  but in both these works the approach differs from ours in terms of their neural network structure and implementation. \cite{Han_et_al2016} proposes a neural network based approximation scheme which does not take the ADP approach.

\smallskip

While the SNES and other storage problems have been studied, rather industriously in the last few years, the rapidly evolving algorithmic technologies and the changing nature of the problem (for example storage is rented instead of owned which partially removes the constrained storage bottleneck but introduces some cost implications), means that algorithm performance can now  be different, and perhaps better than previously understood. This makes a case for re-investigating known algorithms to boost their performance employing the new armory of technologies like advanced machine learning algorithms. The scope of this paper is to revisit API to explore ways to boost its performance by modeling simplification and using powerful frameworks like neural networks. 

The rest of the paper is organized is as follows. In Section \ref{lit_review} we give a literature review of the existing work on the problem, in Section \ref{model_section} we define an example mathematical model from literature and our proposed model as well as explain the differences between them, in Section \ref{analysis_section} we give insights into the structure of the optimal policy, in Section \ref{api_section} we describe our algorithm, in Section \ref{exp_section} we describe our experimental setup including data used, benchmarks, metrics and discuss our findings from numerical experiments, and lastly we conclude in Section \ref{conclude_sec}.

\section{Literature review}\label{lit_review}

The energy storage problem is strongly linked to inventory optimization problems as they both deal with making decisions on how to meet demand from supply sources. The demand and supply could either be deterministic or stochastic and these have been well studied under the inventory optimization theory concepts \cite{porteus1, zipkin1}.
Also, due to the possibility of trading energy with the grid, this problem can  be related to research conducted in commodity trading literature such as \cite{rempala1, secomandi1}. In particular, Secomandi in \cite{secomandi1}, focuses on the commercial management of a commodity storage asset and determining the optimal inventory-trading policy under capacity constraints and  stochastic spot prices.

Current literature on the single-node energy storage problem are more concerned with the formulation as well as the numerical solution and analysis of the mathematical models that have been proposed. Similar to the settings of the problem discussed in this paper, \cite{zhou1} investigate the management of a merchant wind farm co-located with a grid-level storage facility and connected to the market via a transmission line. They formulate the problem as a finite-horizon  Markov decision process. However, in contrast to majority of research conducted on this problem, \cite{zhou1} consider negative electricity prices which is the case in most deregulated markets. Hence, they also propose some heuristics and assess their performance to the optimal policy. Most papers also assume that the price of buying and selling energy on the spot market is the same \cite{nir1, 7010626, salas1}. According to \cite{nir1}, this assumption makes the stochastic version of the SNES problem solvable in polynomial time by dynamic programming. However, when different buying and selling prices are used in the model, the stochastic version of the problem becomes \#P-hard \cite{nir1}. This may be the reason for the use of the same buying and selling price in the models proposed by most literature. They also show that the deterministic case of the single-node energy storage problem can be solved strongly in polynomial time. 

\cite{7010626} consider the use of Approximate dynamic programming (ADP) since the implementation of backward dynamic programming, especially for large-scale problems, can quickly become intractable and computationally intensive. \cite{natarajan1} explore algorithms based on ADP and numerically compare their performance against the Lyapunov optimization-based algorithms in terms of the value of storage and renewable energy sources. In the case of infinite horizon, \cite{harsha1} use a stochastic dynamic program formulation to minimize the average cost derived from the installation and management of a storage device integrated with a renewable energy source to meet uncertain demand under dynamic pricing. They also show that the optimal management policy has a dual threshold structure, which is also discussed in \cite{rempala1, secomandi1} under the commodity trading context. \cite{teleke1} consider a rule-based dispatch scheme for the finite-horizon energy storage problem without taking into account the effect of prices or the variability of wind. \cite{moazeni1} suggest the need to consider the effect of risk on the optimal policy following the results from the risk analysis they conducted on an optimal deterministic risk-neutral policy and a simple myopic policy.  

Other approaches within the broad ADP framework include value function approximations. The ADP proposed in \cite{nascimento1}  iteratively constructs piece-wise linear and concave value function approximations to help determine the solution of complex storage problems. They also prove that their algorithm converges to an optimal policy by learning the optimal value functions for important regions of the state space as selected by their algorithm. \cite{nascimento2} is an extension of the algorithm and results of \cite{nascimento1} to problems where the decision vector is a potentially high-dimensional continuous vector. Similar to \cite{nascimento1, nascimento2}, \cite{salas1} uses the concavity nature of the value function approximations to speed up the convergence of their proposed finite-horizon ADP algorithm. Also, their algorithm designs near optimal time-dependent control policies for energy storage problems involving multiple storage devices.

\cite{warren1} propose the Stochastic Multiscale model for the Analysis of energy Resources, Technology, and policy (SMART) algorithmic strategy based on the ADP framework to model long-term investment decisions and economic analyses of portfolios for energy technologies in the presence of uncertainty. Within ADP falls (approximate) value iteration (AVI) methods which require a lookup table representation of the state space \cite{7010626}. From \cite{7010626, jiang2, nascimento3}, one can deduce that pure lookup AVI table perform poorly in practice due to very slow convergence rate despite the existence of convergence theory. However, results from \cite{7010626} show that structured lookup table AVI outperform other more general approaches like API paired with a generic approximation technique. Nevertheless, it is limited to low-dimensional state-of-the-world variable or moderately sized-problems. Therefore, \cite{hannah1} approximates the value functions by employing Dirichlet process mixture models which scales well to large state spaces. The mixture models are used to cluster the states so that convex value functions can be fit within each cluster.

\cite{7010626} compare the performance of various approximation architectures implemented with ADP approaches such as API, AVI and direct policy search on benchmark instances of the finite-horizon energy storage problem. They use popular nonparametric estimators like Support Vector Regression (SVR), Gaussian Process Regression (GPR), Local Polynomial Regression (LPR) and Dirichlet Cloud-Radial Basis Function (DCR) during the the policy evaluation phase of the API algorithm. SVR has a far better performance with API compared with the other three approximation techniques that were considered by \cite{7010626}. In \cite{liu1}, neural networks are used to approximate the performance index needed to analyze the convergence and stability properties of their proposed policy iteration ADP method for solving the infinite horizon optimal control problem for nonlinear systems. \cite{scott1} focus on the use of a parametric linear model with pre-specified basis functions, least-square  temporal difference (LSTD) and Bellman error minimization with approximate policy iteration to solve the same energy allocation problems considered by \cite{7010626}. A similar approach is used by \cite{lohndorf1} to find the optimal infinite horizon storage and bidding strategy in the day-ahead market for a renewable power generation and energy storage system.

\section{Model}\label{model_section}
We will now introduce the problem and model parameters formally and we assume a single renewable source and single battery. 
The problem has the following parameters:
\begin{itemize}
\item T: number of time periods (T+1 is the terminal storage)
\item $c^h$: storage rent, in $\$$ per MWh per time step.
\item $\eta^I$ and $\eta^W$: charging and discharging inefficiencies of the battery $(\leq 1)$
\item $\gamma^I$, $\gamma^W << R^{M} (\mbox{battery storage limit})$: maximum charging and discharging rates of the device
\end{itemize}
The exogenous information is represented by the following random variables:
\begin{itemize}
\item $E_t$: amount of energy produced by the renewable source at time $t$
\item $D_t$: energy demand of the household (or some other type of energy sink) at time $t$
\item $C_t$: buying price of electricity at time $t$
\item $P_t$: selling price of electricity at time $t$
\end{itemize}
The exogenous information is given by the vector $W_t=(E_t,D_t,C_t,P_t)$. The set of possible realizations of $W_t$ is denoted by $w_t$. Following \cite{halman1} and \cite{warren1}, we make the following assumptions: the stochastic process $\{W_t: t=1, \ldots T\}$ is finite discrete-time Markov; the support and transition probabilities are given; $W_t$ is $F_t$ measurable and finally $C_t \geq P_t$.

\bigskip

We will first give the model often used in literature which we refer to as the \textit{flow model}. 
Identifying Grid by letter G, battery by letter R, demand by D, energy source by E; the decision variables at each time $t$ are the flow variables, given by $y_t = (y^{GR}_t, y^{GD}_t, y^{EG}_t, y^{ER}_t, y^{ER}_t, y^{ED}_t, y^{RG}_t, y^{RD}_t)$, where $y^{ij}_t$ indicates energy flow from device $i$ to device $j$ at time $t$. Denoting the amount of energy in the battery at time $t$ by $R_t$ we have the following transition equation:
\begin{equation}
    R_{t+1} = R_t - y^{RD}_t - y^{RG}_t + (1 - \eta^I)(y^{GR}_t + y^{ER}_t).
\end{equation}
The decision vector $y_t$ should satisfy the following constraints:
\begin{eqnarray}
y^{ER}_t + y^{GR}_t &\leq & min\{\gamma^I, R^M - R_t\}\\
y^{RD}_t + y^{RG}_t &\leq & min\{\gamma^W, R_t\} \\
y^{EG}_t + y^{ER}_t + y^{ED}_t &\leq & E_t \\
y^{ED}_t + (1-\eta^W) y^{RD}_t + y^{GD}_t & = & D_t\\
y_t &\geq & 0
\end{eqnarray}
The single period profit function is given by 
\begin{equation}
    g_t(R_t, y_t, W_t) = P_t((1-\eta^W) y^{RG}_t +  y^{EG}_t) - C_t (y^{GR}_t + y^{GD}_t) - c^h R_{t+1}.
\end{equation}
The goal is to find a policy which maps the states to the decisions that maximizes profit at each time. \\

Most literature mainly use the flow model with minor variations owing to different assumptions and proposes algorithms, exact or heuristic, to solve this problem. Our contribution in the work differs from these in the following ways:
\begin{enumerate}
    \item We capture the decisions using much simpler decision variables, mainly inspired by \cite{secomandi1}, and capture the losses within the objective function rather than as constraints hence deviating from the approach taken by previous studies. This enables us to reduce the dimensionality of the decision space from eight to one.  In optimization literature, relaxations are often useful in obtaining very close-to-optimal solutions and also serve as useful indicators of solution quality. 
    \item Secondly, our variables are discrete integer valued variables as against continuous variables. Main motivation for this is the fact that energy in measure in discrete units and continuous variables are often employed more for computational convenience with rounding applied in the resulting solutions. 
\end{enumerate}

We now give our mathematical formulation for the single node energy storage problem and we refer to our model as the \textit{aggregate model} since our variables may be seen as aggregated flow variables. 
We are interested in determining a non-negative vector $x_t: (x^s_t,x^b_t, x^r_t)$, where $x^s_t,x^b_t, x^r_t$ indicates the amount of energy sold to grid, purchased from grid and stored respectively in time period $t$.

The decision vector must satisfy the following constraints:
\begin{equation}\label{flow}
x^b_t - x^r_t - x^s_t = D_t - E_t - x^r_{t-1}
\end{equation}
\begin{equation}
    x^r_t - x^r_{t-1}\leq \gamma^I; \quad  x^r_{t-1} - x^r_{t}\leq \gamma^W; \quad 0 \leq x^r_t \leq R^M \hspace{0.3pc} \forall t
\end{equation}
The right hand side of (\ref{flow}) is (demand-supply), that is, \textit{net demand}. Note that due to our choice of decision variables we have $R_t = x^r_{t-1}$, with this in mind we write the single period profit function as follows:
\begin{equation}
g_t(x_t, W_t) = P_t x^s_t - C_t x^b_t - c^h x^r_t - [ S\eta^I (x^r_t - x^r_{t-1})^+ + S \eta^W (x^r_t - x^r_{t-1})^- ],
\end{equation}
where $S$ is the cost incurred due to loss of energy owing to the inefficiency of the system. In any given time $t$ the valuation of loss $S$ depends on whether the decision-maker decides to buy or sell. Clearly, $P_t\leq C_t$ implies buying and selling at the same time is sub-optimal. The inefficiency losses are valued at $C_t$ when buying and at $P_t$ when selling. However, in our experiments we valued all losses at $C_t$. We highlight here that valuing losses in monetary units rather than in energy terms is plausible especially when the battery operation is outsourced. Therefore, our model is easily usable in the scenario where the storage is not directly owned by the decision-maker, instead the decision-maker uses a storage-as-service model. The loss terms in the aggregated model's profit function has a more general motivation, meaning, it can be seen as variable cost charged for injection and withdrawal into the storage which depends on the amount of energy injected or withdrawn. In many practical storage problems this is often the case, hence our model captures a more general scenario of which SNES is a special case.  

\smallskip

Observe that the profit function in the aggregate model is non-linear as opposed to the flow model where the profit function is linear. This may seem a bad idea at first sight but it gives us a trade-off by reducing the decision vector dimensionality which can be very useful in the policy improvement stage of a policy iteration algorithm.

For the sake of completeness we present the dynamic programming (DP) formulation but we did not attempt to solve the resulting DP. For the ease of exposition let $N_t = D_t-E_t$ and $a=x^r_t - x^r_{t-1}$. Using Equation \ref{flow} and this notation, we can eliminate the variables $x^b_t$ and $x^s_t$. The following definition will be useful, $p(x,W) = g(x,W) - c^hx$. Note that with this notation our decision variable at any given time is single-dimensional $a$ which when positive means injection of energy in the battery and negative means withdrawal of energy from the battery. 

Following \cite{secomandi1}, at any decision time, the sets of feasible withdrawal and injection decisions, respectively, with current battery level $x$ are defined as $A^{FW}$ and $A^{FI} = [0,R^M-x]$. We denote the set of all feasible actions by $A^F(x)$.  The revenue function $p_t(a,W)$ is 
\begin{eqnarray}
p_t(a,W_t) &=& -(C_t N_t + a C_t (1+\eta^I)) \hspace{1pc} \mbox{buy and inject}\\
&=& -(C_t N_t + a C_t (1+\eta^W)) \hspace{1pc} \mbox{buy and withdraw}\\
 &=& -(P_t N_t + a P_t (1+\eta^I)) \hspace{1pc} \mbox{sell and inject}\\
&=& -(P_t N_t + a P_t (1+\eta^W)) \hspace{1pc} \mbox{sell and withdraw}\\
&=& -(P_t N_t ) \hspace{1pc} \mbox{sell }\\
&=& -(C_t N_t ) \hspace{1pc} \mbox{buy}\\
&=& -(a P_t (1+\eta^I)) \hspace{1pc} \mbox{inject}\\
&=& -(a C_t (1+\eta^W)) \hspace{1pc} \mbox{withdraw}
\end{eqnarray}

An energy management policy can be obtained by solving a finite horizon MDP using the following dynamic programming recursion.

\begin{eqnarray}\label{payoff}
V_T(x^r_{T-1},W_T) &=& \mbox{max}_{a\in A^{FW}(x)} \mbox{min} \{-(C_T N_T + a C_T (1+\eta^W)),-(P_T N_T + a P_T (1+\eta^W)) \} \nonumber\\
V_t(x^r_{t-1},W_t) &=& \mbox{max}_{a\in A^{F}(x)} v_t(a, x^r_{t-1},W_t), t\in \mathcal{T}, (x^r_{t-1},W_t) \in \mathcal{X} \times \mathcal{W}_t, \nonumber\\
v_t(a, x^r_{t-1},W_t) &=& p_t(a,W_t) - c^h x^r_t + \delta_t \mathbb{E}_t[V_{t+1} (x^r_{t-1} + a, \tilde{W}_{t+1})] 
\end{eqnarray}
The formulation should be interpreted as: in the last stage we can buy or sell depending on the net demand energy but do not inject; in the remaining stages, we have eight actions resulting from the Cartesian product of \{sell, buy, neither sell nor buy \} and \{inject, withdraw, neither inject nor withdraw\}. We will now give some structural insights into the optimal policy, in the same vein as discussed in \cite{secomandi1}.

\section{Optimal policy structure}\label{analysis_section}
This section analyzes the structure of the optimal policy. Although there are a number of studies in literature focusing on solving or approximately solving the DP we are not aware of any study analyzing the structure of the optimal policy except that of \cite{secomandi1}. Our problem is a generalization of the problem studied in \cite{secomandi1} in two ways: 
\begin{enumerate}
\item in our setting buying and selling happen at different prices, and 
\item  in each period we have (stochastic) demand and (stochastic) production (wind).
\end{enumerate} 

In \cite{secomandi1}, the author shows that when $\gamma^I$, $\gamma^W$ are (much) less than the maximum battery capacity the optimal action at each time period depends not only on the spot price but also on the initial battery level. This is a very important insight because it results in the optimal action space having a specific structure split into three phases (inject, do-nothing, withdraw) depending on the initial battery level, hence departing from this structure and employing sub-optimal action can result in very low payoff. 

\begin{observation}
When buying and selling prices are equal the optimal action in a period only depends on the initial battery level, prices and injection/withdrawal rates but not on the demand and wind profiles.
\end{observation}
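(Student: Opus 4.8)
The plan is to show that, once $C_t=P_t=:\pi_t$, the net demand $N_t:=D_t-E_t$ enters the Bellman objective only through an additive term that is constant in the action, so that it plays no role in the maximization in (\ref{payoff}); the optimal action can then depend only on the data that remains, namely the incoming level $x^r_{t-1}$, the price $\pi_t$ and the rates $\gamma^I,\gamma^W$ (and on the fixed quantities $c^h,\eta^I,\eta^W,R^M$).

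\textbf{Step 1 (the period reward splits off the net demand).} Setting $C_t=P_t=\pi_t$ in the expressions for $p_t(a,W_t)$ — equivalently, using (\ref{flow}) to write the net grid transaction as $x^b_t-x^s_t=N_t+a$ and substituting into the profit function — every feasible action $a$ yields a reward of the form $p_t(a,W_t)=-\pi_t N_t+\tilde p_t(a)$, where $\tilde p_t$ depends on the action only through $a$ and on the data only through $\pi_t$ and the (fixed) efficiencies; the entire dependence on $(D_t,E_t)$ is confined to the action-free constant $-\pi_t N_t$. The same holds in the terminal stage: with $C_T=P_T$ the inner minimum in (\ref{payoff}) collapses and $V_T(x,W_T)=-\pi_T N_T+\max_{a\in A^{FW}(x)}\tilde p_T(a)$. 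One also notes that the feasible set $A^F(x)=[\max\{-\gamma^W,-x\},\,\min\{\gamma^I,R^M-x\}]$ is itself free of $(D_t,E_t)$: any battery change in this interval can be balanced against any value of $N_t$ through the grid.

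\textbf{Step 2 (backward induction on the value function).} I would establish, by downward induction on $t$, the decomposition
\[
V_t(x,W_t)=\Phi_t(W_t)+\Psi_t(x,\pi_t),
\]
in which $\Phi_t$ does not involve $x$ and $\Psi_t$ does not involve $(D_t,E_t)$, together with the assertion that the maximizer in (\ref{payoff}) is a function of $(x,\pi_t,\gamma^I,\gamma^W)$ only. The base case $t=T$ is exactly Step 1. For the inductive step, substitute the decomposition of $V_{t+1}$ into the recursion to get
\begin{align*}
v_t(a,x,W_t)={}& -\pi_t N_t+\delta_t\,\mathbb{E}_t[\Phi_{t+1}(\tilde W_{t+1})]\\
&+\tilde p_t(a)-c^h(x+a)+\delta_t\,\mathbb{E}_t[\Psi_{t+1}(x+a,\tilde\pi_{t+1})].
\end{align*}
The first line is constant in $a$ and becomes $\Phi_t(W_t)$; the second line depends on $W_t$ only through $\pi_t$, so maximizing it over $a\in A^F(x)$ yields a maximizer depending only on $(x,\pi_t,\gamma^I,\gamma^W)$ and a maximum value $\Psi_t(x,\pi_t)$. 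Collecting terms closes the induction, and the maximizer identified along the way is the claimed optimal action.

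\textbf{Expected obstacle.} The delicate point is the last claim in Step 2: that $\mathbb{E}_t[\Psi_{t+1}(x+a,\tilde\pi_{t+1})]$ does not covertly reintroduce a dependence on $(D_t,E_t)$. Since $\Psi_{t+1}$ no longer sees future demand or wind, the only channel left is the conditional law of the future price under the Markov kernel started from $W_t$; the argument goes through cleanly provided the price process evolves autonomously of $(E_t,D_t)$, so that the current price is a sufficient statistic for future prices — I would state this (as in the standard setup) as the operating hypothesis, or else read ``prices'' as ``the information in $W_t$ bearing on current and future prices.'' The rest is bookkeeping: verifying the split of Step 1 against each listed case of $p_t$, and observing that the exogenous pair $(\pi_t,N_t)$ never multiplies a decision variable, so that the entire discounted net-demand cost simply detaches from the optimization.
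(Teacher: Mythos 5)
Your proof is correct, but it formalizes what the paper leaves as a one-step economic argument. The paper's own proof is purely verbal: it observes that when $C_t=P_t$, injecting a unit has the same opportunity cost whether the unit is diverted from a sale or bought from the grid (``storing in the current period is equivalent to buying in respect of not earning revenue in that period''), so perturbing $(D_t,E_t)$ cannot flip the optimal action between injection and withdrawal. You instead prove the stronger structural fact behind that intuition: since $x^b_t-x^s_t=N_t+a$ and the single price makes the grid transaction enter the reward as $-\pi_t(N_t+a)$ plus loss terms in $(a,\pi_t)$ alone, the net demand detaches as an action-free additive constant, and a backward induction propagates the decomposition $V_t(x,W_t)=\Phi_t(W_t)+\Psi_t(x,\pi_t)$ so that the argmax depends only on $(x,\pi_t,\gamma^I,\gamma^W)$. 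Your version buys two things the paper's does not: it covers the full dynamic problem rather than a single-period marginal comparison, and it exposes the hidden hypothesis that the conditional law of future prices must not depend on $(D_t,E_t)$ — an assumption the paper never states but which its experimental price processes (autonomous Markov chains) do satisfy. The cost is length; the paper's argument conveys the economic reason in three sentences, while yours requires checking the reward split against each case of $p_t$ and carrying the induction. Both are valid readings of the claim, and yours is the one I would trust if the observation were to be cited as a lemma elsewhere.
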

\begin{proof}
  The statement means that at fixed prices, battery level and rates, changing the wind and demand levels in each period will not shift the optimal action from injection to withdrawal and vice-versa. To see this note that if injection is optimal in the current period for a given demand and wind profiles, the injection may be done for two reasons: satisfy future demand and selling in the future. Since buying and selling prices are equal, storing in the current period is equivalent to buying in respect of not earning revenue in that period. Therefore, irrespective of demand it is optimal to store as long as prices remain the same.
\end{proof}

This aligns with the results in \cite{nir1} that the case with equal buying and selling prices is easy compared to the case otherwise. 
On the other hand when buying and selling prices are not equal different demand patterns can result in different optimal strategies.

\begin{observation}
When buying and selling happen at different prices optimal actions also depend on wind and demand profiles.
\end{observation}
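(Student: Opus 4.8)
The plan is to prove this by counterexample, as the natural complement of the preceding observation: I would exhibit two instances of the dynamic program (\ref{payoff}) that share the same current battery level $x^r_{t-1}=x$, the same buying and selling prices $C_t>P_t$, the same rate limits $\gamma^I,\gamma^W$, and the same data in all periods strictly after $t$, but differ only in the period-$t$ net demand $N_t=D_t-E_t$, and for which the optimal period-$t$ action moves from injection ($a>0$) to withdrawal ($a<0$). Since the only thing varying across the two instances is the wind/demand pair, this demonstrates that the optimal action is not a function of price, battery level and rates alone.

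For the construction I would place $t$ at the next-to-last stage (so the continuation value $\delta_t\mathbb{E}_t[V_{t+1}(x+a,\tilde W_{t+1})]$ is computed directly from the terminal line of (\ref{payoff})), or, even more simply, make the post-$t$ process deterministic so the expectation disappears; I would pick $x\in[\gamma^W,\,R^M-\gamma^I]$, which is nonempty since the rates are much smaller than $R^M$, so that injection and withdrawal are both feasible up to the rates. In the first instance take a large deficit, $N_t\gg 0$: every feasible action must purchase at $C_t$ to cover $N_t$, and withdrawing ($a<0$) strictly lowers the purchased quantity at marginal value of order $C_t(1+\eta^W)$, its only cost being the reduced stored amount $x^r_t=x+a$; choosing the continuation data so that the marginal continuation value of stored energy is below $C_t(1+\eta^W)$ makes withdrawal optimal. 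In the second instance take a surplus, $N_t\ll 0$: the surplus can either be sold now at $P_t$ or used to inject, and storing it to displace a future purchase at $C_t$ beats selling it now at $P_t$ as soon as the marginal continuation value of a stored unit exceeds $P_t(1+\eta^I)$, so injection is optimal.

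The crux, and the step I expect to be the main obstacle, is calibrating the single piece of continuation/terminal data so that the \emph{same} marginal continuation value of stored energy simultaneously lies below the withdraw-versus-buy threshold in the deficit instance and above the sell-now threshold in the surplus instance; this is feasible precisely because $P_t(1+\eta^I)<C_t(1+\eta^W)$ can be arranged when $C_t>P_t$, i.e.\ the admissible window is exactly the dual-threshold gap of \cite{secomandi1} and its width is governed by the price spread $C_t-P_t$. Consequently the argument uses only $C_t>P_t$ strictly and collapses when $C_t=P_t$, in agreement with the previous observation and with the hardness dichotomy of \cite{nir1}. I would close by noting that, varying $D_t$ and $E_t$ separately rather than only through $N_t$, the same example shows dependence on the demand and the wind profiles individually.
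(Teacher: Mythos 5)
Your proposal is correct in substance and, like the paper, proceeds by exhibiting instances that differ only in the wind/demand data yet have different optimal actions; however, the counterexample you build is genuinely different from the paper's. The paper fixes a three-period price profile in which buying now to resell later is never profitable (Figure \ref{price_profile}) and then varies the net demand of the \emph{future} period: if net demand in period three is positive, buying and injecting in period one can be optimal to meet that demand, while if it is non-positive, storage has no resale value under that price profile and injecting need not be optimal. You instead hold everything fixed except the \emph{current} period's net demand $N_t$ and exploit the wedge $P_t(1+\eta^I) < C_t(1+\eta^W)$ between the sell-side and buy-side marginal valuations of stored energy: a continuation value calibrated inside this window makes withdrawal optimal under a deficit and injection optimal under a surplus. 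Your route is more quantitative --- it pins down exactly why the argument collapses when $C_t = P_t$, tying this observation cleanly to the preceding one and to the dual-threshold structure of \cite{secomandi1} --- whereas the paper's example is more qualitative and makes the complementary point that the optimal action depends on demand in \emph{other} periods, not only the current one. The step you yourself flag as the crux (exhibiting continuation data whose marginal value lies strictly inside the window) does still need to be written out explicitly for the argument to be complete, but it is routine once the window is shown to be nonempty, which your condition on $C_t/P_t$ guarantees.
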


\begin{proof}
Consider for example the price profile given in  Figure \ref{price_profile} where dotted lines indicate selling and thick lines indicate buying prices. Clearly, it is not optimal to buy in period one to sell in period three or in period two. Therefore, injecting is optimal at any initial battery level only if net demand in period three is positive, in which case depending on injection and withdrawal rates it may be optimal buy and store in period one to satisfy demand in period three. On the other hand if net demand is non-positive in period three it may not be optimal to buy and inject in period one. This illustrates the complexity of our problem compared with \cite{secomandi1}.   
\end{proof}
\begin{figure}[H]
\centering
      \begin{tikzpicture}
         \node[] (pic) at (0,0) {\includegraphics[]{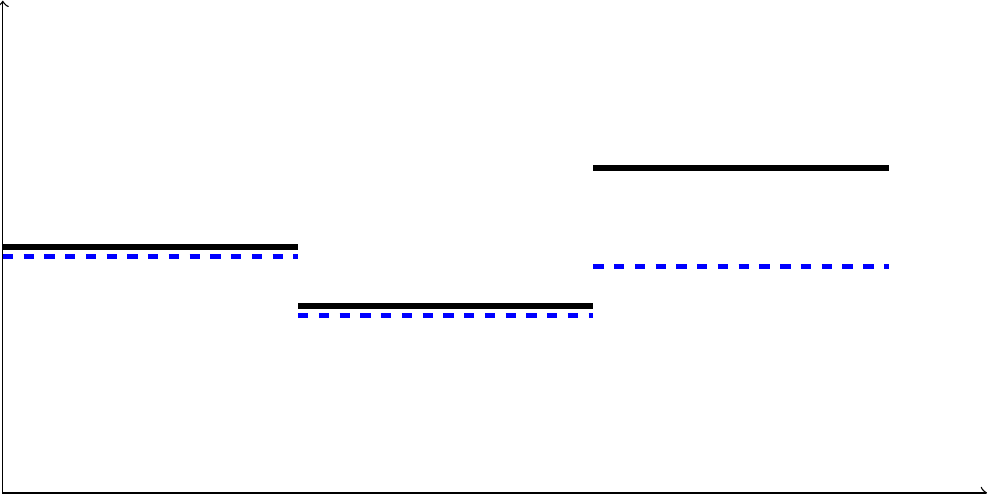}};
      \end{tikzpicture}
      \caption{Example price profile for three periods}
      \label{price_profile} 
\end{figure}

\subsection{Negative surplus}
In the case of negative surplus, that is when wind plus energy in the battery is less than demand the optimal policy structure is similar to that in \cite{secomandi1} with three phases: buy-and-inject, buy, buy-and-withdraw. As indicated in Figure \ref{ns1}, depending on the initial battery level and the buying price, the optimal decision varies. We highlight here, as noted in \cite{secomandi1}, the injection and withdrawal rates play a key role in the optimal action. 

\begin{minipage}{.9\linewidth}
\begin{figure}[H]
\centering
      \begin{tikzpicture}
         \node[] (pic) at (0,0) {\includegraphics[]{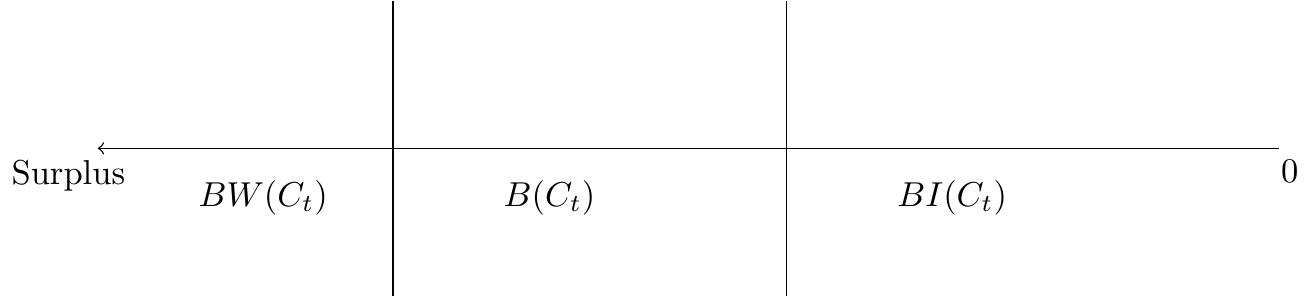}};
      \end{tikzpicture}
      \caption{Illustration of the optimal policy structure for a given stage and prices in the case of negative surplus}
      \label{ns1} 
\end{figure}
\end{minipage}
      
\subsection{Positive surplus}

In contrast to the negative surplus case the structure of the optimal action policy in any given period with positive surplus is much complicated, and depends, at a given time and given buying and selling prices, not only on the inventory but also on the total net surplus as shown in Figure \ref{ps1}.

\begin{minipage}{.9\linewidth}
\begin{figure}[H]
\centering
      \begin{tikzpicture}
         \node[] (pic) at (0,0) {\includegraphics[]{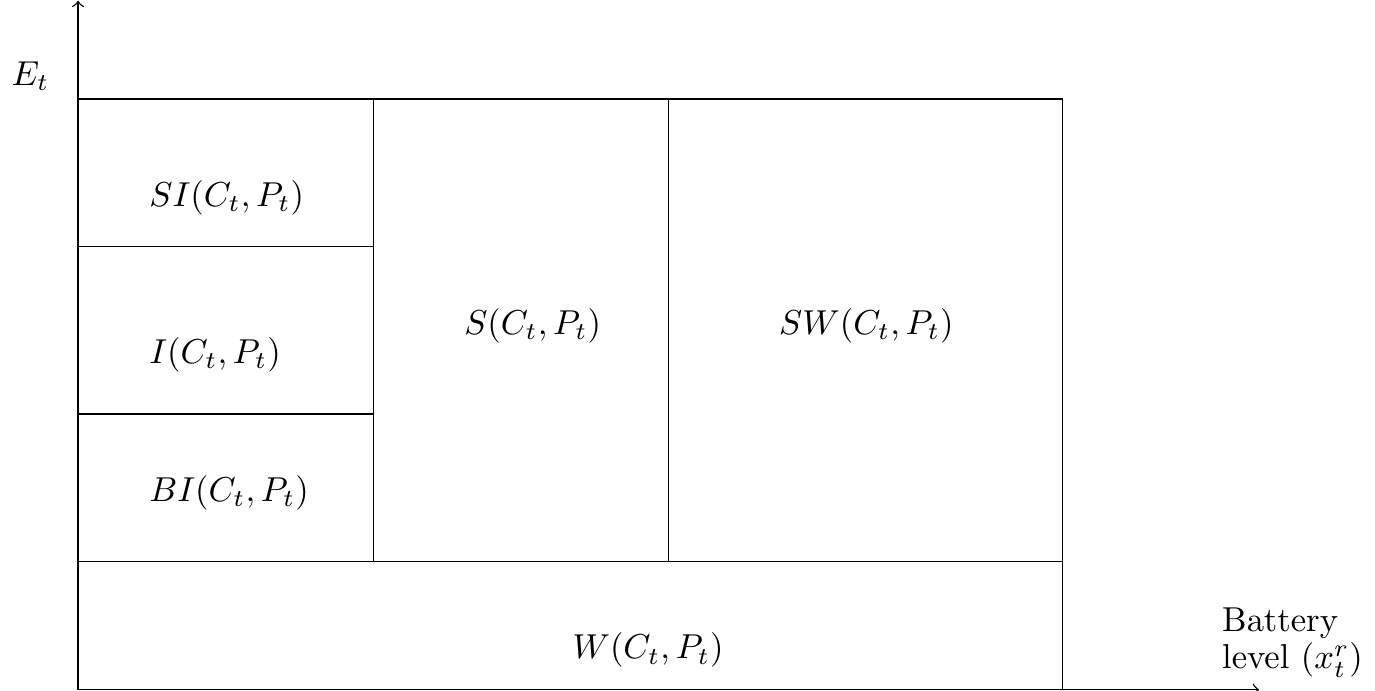}};
      \end{tikzpicture}
      \caption{Illustration of the optimal policy structure for a given stage and prices in the case of positive surplus}
      \label{ps1} 
\end{figure}
\end{minipage}

Before going to discuss API for the SNES problem, we will briefly describe the deterministic version of the problem and give the Integer program (IP) formulation. 
 
\section{Deterministic case} \label{deter_sec}
The deterministic version of SNES, that is, when the demand, wind and prices in each period are known, is a special case of lot sizing problem with losses, bounded inventory and constrained injection/withdrawal rates. Lot sizing problem is very well studied in operations research literature including deterministic and stochastic variants, see \cite{Pochet_wolsey_book}. However, we are not aware of a reference which studies with constrained injection/withdrawal rates. \cite{nir1} gave a reduction of SNES to the minimum-cost flow problem proving its polynomial solvability. It appears a similar reduction can be used to solve the SNES problem modeled with aggregate variables. Since this not our main objective in this work we leave this for future work and give a simple integer programming formulation for deterministic SNES:
Let
\begin{equation*}
\mathcal{A}= 
\begin{aligned}
& \mbox{\{buy-inject, sell-inject, inject,}\\
& \mbox{buy-withdraw, sell-withdraw, withdraw,}\\ 
& \mbox{buy, sell, do-nothing\}};
\end{aligned} 
\end{equation*} $\mathcal{II}= \mbox{\{buy-inject, sell-inject, inject\}}$; $\mathcal{WW}= \mbox{\{buy-withdraw, sell-withdraw,  withdraw\}}$. We have the following variables for every period $t$:
\begin{eqnarray*}
z^j_{t} &=& 1 \hspace{0.3pc} \mbox{if $j$ is true, where $j\in \mathcal{A}$, }\\
            &=& 0 \hspace{0.3pc}  \mbox{otherwise.} \\
w^j_{t} &=& \mbox{total injection into battery when $j$ is one of \{buy-inject, sell-inject, inject\}}, \\
&=& \mbox{total withdrawal from battery when $j$ is one of \{buy-withdraw, sell-withdraw, withdraw\}} \\
x^j_{t} &=& \mbox{total units stored ($j=r$) in period $t$}\\
           &=& \mbox{total units sold ($j=s$) in period $t$}\\
           &=& \mbox{total units bought ($j=b$) in period $t$}
\end{eqnarray*}

\begin{align}
\mbox{max} \quad &  \sum_t \left (P_t x^s_t - C_t x^b_t - c^h x^r_t - \eta^I  \sum_{i \in \mathcal{II}} S w^i - \eta^W \sum_{i \in \mathcal{WW}} S w^i    \right ) \label{ip0} \\
\mbox{s.t.} \quad & \sum_{i\in \mathcal{A}} z^i_t  = 1 \hspace{0.5pc} \mbox{for all $t$} \label{ip1}\\
& \gamma^I \sum_{i\in \mathcal{II}} z^i_t \geq x^r_t - x^r_{t-1} \hspace{0.5pc} \mbox{for all $t$} \label{ip2} \\
& \gamma^W  \sum_{i\in \mathcal{WW}} z^i_t \geq -(x^r_t - x^r_{t-1}) \hspace{0.5pc} \mbox{for all $t$} \label{ip3}\\
& w^{j}_t \leq Mz^{j}_t \hspace{0.5pc} \mbox{for all $j\in \mathcal{WW}\cup \mathcal{II}$}; \mbox{for all $t$} \label{ip4}\\
& \sum_{i\in \mathcal{II}} w^{i}_t  \geq x^r_t - x^r_{t-1} \hspace{0.5pc} \mbox{for all $t$} \label{ip5}\\
& \sum_{i\in \mathcal{WW}} w^{i}_t  \geq -(x^r_t - x^r_{t-1}) \hspace{0.5pc} \mbox{for all $t$} \label{ip6}\\
& x^b_t - x^r_t - x^s_t = D_t - E_t - x^r_{t-1} \hspace{0.5pc} \mbox{for all $t$}  \label{ip7}\\
& x^r_t \leq R^{M} \quad \forall t< T \label{ip8}\\
&  Z \in \{0,1\};\quad X\in \mathbb{Z}^+; \quad W \geq 0 \label{ip9}
\end{align}

\begin{lemma}
\mbox{LS-L-BI} is a valid IP formulation of deterministic SNES.
\end{lemma}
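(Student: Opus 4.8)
The plan is to establish a value-preserving correspondence between feasible decision sequences of deterministic SNES (the aggregate model of Section~\ref{model_section} with $D_t,E_t,C_t,P_t$ now known) and feasible solutions of the integer program (\ref{ip0})--(\ref{ip9}), and then conclude that the two optimal values coincide and that an optimal IP solution induces an optimal SNES policy. I would prove the two directions separately.

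\emph{From SNES to the IP.} Given a feasible SNES solution -- a storage trajectory $x^r_0,\dots,x^r_T$ with $0\le x^r_t\le R^M$ and $-\gamma^W\le x^r_t-x^r_{t-1}\le\gamma^I$, together with the amounts $x^b_t,x^s_t$ induced by the balance equation (\ref{flow}) (which may be taken with $\min\{x^b_t,x^s_t\}=0$, since $P_t\le C_t$) -- set $a_t:=x^r_t-x^r_{t-1}$ and pick the unique $j\in\mathcal{A}$ matching the pair (sign of $a_t$, sign of $x^b_t-x^s_t$): an element of $\mathcal{II}$ if $a_t>0$, of $\mathcal{WW}$ if $a_t<0$, and one of \{buy, sell, do-nothing\} if $a_t=0$. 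Put $z^j_t=1$ and all other $z^\cdot_t=0$; put $w^j_t=|a_t|$ when $j\in\mathcal{II}\cup\mathcal{WW}$ and all other $w^\cdot_t=0$. Then I would verify (\ref{ip1})--(\ref{ip9}) one by one: (\ref{ip1}) is immediate, (\ref{ip2})--(\ref{ip3}) follow from the rate bounds, (\ref{ip4})--(\ref{ip6}) from $w^j_t=|a_t|$, and (\ref{ip7})--(\ref{ip8}) are the SNES constraints themselves. Since $\eta^I\sum_{i\in\mathcal{II}}Sw^i_t+\eta^W\sum_{i\in\mathcal{WW}}Sw^i_t=S\eta^I(a_t)^++S\eta^W(a_t)^-$, the IP objective (\ref{ip0}) equals the SNES profit $\sum_t g_t(x_t,W_t)$ of the given solution.

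\emph{From the IP to SNES.} Conversely, take any feasible solution of (\ref{ip0})--(\ref{ip9}) and set $a_t:=x^r_t-x^r_{t-1}$. By (\ref{ip1}) exactly one action is active each period; combining this with (\ref{ip2})--(\ref{ip3}) and $\mathcal{II}\cap\mathcal{WW}=\emptyset$ yields $a_t\in[0,\gamma^I]$ when the active action lies in $\mathcal{II}$, $a_t\in[-\gamma^W,0]$ when it lies in $\mathcal{WW}$, and $a_t=0$ otherwise, so in every case $-\gamma^W\le a_t\le\gamma^I$. With (\ref{ip8}) and nonnegativity from (\ref{ip9}) this shows $x^r_0,\dots,x^r_T$ is a feasible SNES trajectory, and (\ref{ip7}) reproduces (\ref{flow}), so $(x^b_t,x^s_t,x^r_t)$ is a feasible SNES solution. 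For the objective, (\ref{ip4}) forces at most one $w^\cdot_t$ per period to be nonzero and, on the side consistent with the sign of $a_t$, (\ref{ip5})--(\ref{ip6}) force it to be at least $|a_t|$ (and $0$ when $a_t=0$); hence the loss subtracted in (\ref{ip0}) is at least $S\eta^I(a_t)^++S\eta^W(a_t)^-$, so the IP objective is at most the SNES profit of the induced solution, with equality exactly when the $w$-variables are tight.

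\emph{Conclusion and main obstacle.} The forward map gives $\mathrm{OPT}_{\text{IP}}\ge\mathrm{OPT}_{\text{SNES}}$ and the backward map gives that every IP solution is weakly dominated by a feasible SNES solution, so $\mathrm{OPT}_{\text{IP}}\le\mathrm{OPT}_{\text{SNES}}$; the optima therefore agree and an optimal IP solution yields an optimal SNES policy. The one point requiring care -- and the only genuine obstacle -- is that the feasible regions are \emph{not} in bijection: the $w$-variables are only lower bounded, the action labels are not pinned down when $x^b_t$ or $x^s_t$ vanish, and nothing a priori forbids $x^b_t,x^s_t>0$ simultaneously. Hence the claim of validity must be read at the level of optimal values, which one supports either by the domination argument above or, equivalently, by first removing such slack solutions (using $P_t\le C_t$ for the buy/sell redundancy and the negativity of the loss coefficients in (\ref{ip0}) for the $w$-slack). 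Boundary periods -- $t=1$ with the given initial level, and the terminal stage where injection is disallowed, cf.\ (\ref{payoff}) -- are handled by the same case analysis restricted to the admissible action set.
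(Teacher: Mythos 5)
Your proposal is correct and follows essentially the same route as the paper's proof: a constraint-by-constraint verification that (\ref{ip1})--(\ref{ip8}) encode the SNES feasibility requirements, together with the observation that the negative loss coefficients in (\ref{ip0}) force (\ref{ip5})--(\ref{ip6}) to be tight at an optimum, so that validity holds at the level of optimal values. The paper states this in a few lines; you have simply made the same argument rigorous by spelling out the two-directional, value-preserving (respectively, dominating) correspondence and by flagging the slack in the $w$-variables and in the simultaneous buy/sell case, which the paper leaves implicit.
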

\begin{proof}
  (\ref{ip1}) ensures exactly one type of decision is taken from set $\mathcal{A}$, (\ref{ip2}) and (\ref{ip3}) make sure that injection and withdrawal limits are respected in each time period, (\ref{ip4})-(\ref{ip6}) make sure that exactly one of injection or withdrawal $w$ variables is positive and negative sign in objective function implies (\ref{ip5}) and (\ref{ip6}) will be tight in an optimal solution. Finally (\ref{ip7}) is a balancing constraint which ensures feasibility.
\end{proof}

We use (\ref{ip0})-(\ref{ip9}) in solving deterministic SNES in numerical experiments in evaluating API policies. As a last remark in this section, we note that we can formulate deterministic SNES more compactly without having $z$ variables and deduce the decision from $x$ and $w$ variables. However, the above integer programme gives this information as its output. Since the instances we solve are very small in size the computation times using the above IP are still very small making it feasible.  

\section{Approximate policy iteration with neural networks}\label{api_section}
Policy iteration (PI) is a well-known algorithmic technique used to solve stochastic dynamic programs (DP). Several results exist on the convergence of PI for DPs, see for example, \cite{bertseka2018,bertsekas_tsitsiklis}. PI has two main steps, \emph{evaluation} and \emph{improvement}. A policy is  a mapping of a state space to a decision space. A policy is feasible if it satisfies all constraints. The idea is to start with a feasible policy and iteratively improve it after evaluating at each iteration. It is shown, under certain mild assumptions, the policy converges to the optimal policy. Exact policy iteration requires evaluating the entire state space multiple, if not many, times which is almost an impossible task even for moderately large state spaces. This made many researchers focus on approximate ways of implementing policy iteration while still maintaining convergence. For a survey about API see \cite{bertsekas2011a}. Simulation is often used instead of full state space evaluation. This is the approach we take in this paper. Instead of evaluating a policy on the full state space we use monte-carlo simulation to generate a fixed number of sample states and then learn the value function using a machine learning model which takes as input the current state and decisions and predicts the future payoff. Some studies refer to the machine learning framework used to learn an approximate value function as approximation architectures. Many approximation schemes have been proposed in literature. Most of these use (some sort of) linear approximation architectures, for example the value function is expressed as a linear function of known set of features or sometimes also called basis functions. The policy evaluation data from the simulation is then used as input to fit coefficients for these features, for examples, see \cite{7010626, jiang2} and references therein. In the cases when the features are already not known, learning algorithms like neural networks can be used to learn these features. This is the idea proposed in \cite{bertseka2018}. In this paper we do not restrict to linear architectures for value function approximation, instead we use neural networks which are capable of learning highly non-linear functions. We give a formal description of our algorithm in Algorithm \ref{api}, where $\mathcal{NN}$ in Step 5 stands for neural networks. 

Using machine learning within API is not new and have already been explored before, owing to the fact that learning high dimensional functions is central to machine learning theory. In \cite{7010626} several machine learning techniques like support vector machines are used for Step 5. Original ideas of using neural networks within approximate policy evaluation was proposed within reinforcement literature many years back, see for example \cite{Tesauro02}. More recently, \cite{bertseka2018} proposed API with neural nets to approximate the cost function of policy evaluation using feature based approximation where linearly linked features are learned using neural networks. Our work is based on exactly similar ideas but we do not use any feature based combination to approximate value function instead we use deep neural networks as black box models to predict function values. As \cite{bertseka2018} points out, to use neural networks within API which does not assume linear combination of features requires development of models which enable dimensionality reduction. In our model we achieve this by valuing injection and withdrawal losses in monetary terms and moving them to the objective function. This enables us to express the decision vector in just one dimension, that is, we just have one variable which measure how much energy is stored in the battery in each period. For a fixed value of storage in a time period, $x^r_t$, the unique values for buying and selling decisions can be easily derived. We formally state this in the following Lemma. Before stating the Lemma we give the following observations. 

\begin{observation}
In every time period, only one of the buying and selling decisions is optimal.
\end{observation}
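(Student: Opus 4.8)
\emph{Proof proposal.} The plan is to prove the statement period-by-period, holding the end-of-period storage level fixed. First I would fix an arbitrary stage $t$ and an arbitrary feasible value of $x^r_t$ (equivalently, fix $a=x^r_t-x^r_{t-1}$). The balance constraint (\ref{flow}) then pins down the difference $x^b_t-x^s_t=N_t+a$, so the set of feasible pairs is exactly $(x^b_t,x^s_t)=\big((N_t+a)^+ + \theta,\ (N_t+a)^- + \theta\big)$ with $\theta\ge 0$. The key observation here is that none of the remaining constraints --- the injection/withdrawal rate bounds, $0\le x^r_t\le R^M$, and nonnegativity --- involve $x^b_t$ or $x^s_t$, and the transition to the next state depends only on $x^r_t$; hence every $\theta\ge 0$ is feasible and changing $\theta$ does not affect the successor state.

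Next I would isolate the dependence of the single-period profit on $\theta$. The storage-cost term $-c^h x^r_t$ and the loss term $S\eta^I a^+ + S\eta^W a^-$ depend only on $x^r_t$ (equivalently $a$), so they are constant in $\theta$; the only $\theta$-dependent contribution is $P_t x^s_t - C_t x^b_t = \big(P_t(N_t+a)^- - C_t(N_t+a)^+\big) + (P_t - C_t)\theta$. By the standing assumption $C_t\ge P_t$, the coefficient $P_t-C_t$ is nonpositive, so this expression is non-increasing in $\theta$ and is maximised at $\theta=0$. At $\theta=0$ we have $\min\{x^b_t,x^s_t\}=\min\{(N_t+a)^+,(N_t+a)^-\}=0$, i.e. at most one of buying and selling is positive; moreover, when $C_t>P_t$ strictly, any action with both $x^b_t>0$ and $x^s_t>0$ is strictly sub-optimal.

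Finally I would note that this reduction can be carried out independently in every stage and for every fixed storage trajectory, so it is fully compatible with the dynamic-programming recursion (\ref{payoff}): restricting the stage-$t$ action set to actions satisfying $\min\{x^b_t,x^s_t\}=0$ leaves the optimal value functions unchanged. The argument is elementary; the only step needing care is the bookkeeping that driving $\theta$ to $0$ preserves feasibility of all other constraints and does not perturb the state transition, which is why I would pin down the feasible-set parametrisation before touching the objective.
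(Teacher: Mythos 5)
Your proposal is correct and rests on exactly the same idea as the paper's (one-line) proof: simultaneous buying and selling adds a common increment $\theta$ to both $x^b_t$ and $x^s_t$ whose net effect on profit is $(P_t-C_t)\theta\le 0$ under the standing assumption $C_t\ge P_t$, so $\theta=0$ is optimal. You have simply written out the feasibility and state-transition bookkeeping that the paper leaves implicit.
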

\begin{proof}
  The statement has to be true due to the assumption $P_t\leq C_t$.
\end{proof}

\begin{observation}
It is not optimal to both inject and withdraw from the battery in the same time period.
\end{observation}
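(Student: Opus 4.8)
The statement to prove is: it is not optimal to both inject and withdraw from the battery in the same time period.

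My plan is a straightforward exchange/cancellation argument on a single period, keeping all other periods fixed. Suppose a policy injects an amount $p>0$ and withdraws an amount $q>0$ in the same period $t$, with $p\le\gamma^I$ and $q\le\gamma^W$. Since $R_{t+1}=x^r_t$ is determined only by the \emph{net} change $a = x^r_t - x^r_{t-1}$, replacing $(p,q)$ by $(p',q')=(p-\min\{p,q\},\,q-\min\{p,q\})$ leaves $a$ unchanged, hence leaves $x^r_t$, all future states, and the holding cost term $c^h x^r_t$ unchanged; it also keeps the rate constraints satisfied since we only decreased $p$ and $q$. So the downstream value $\delta_t\mathbb{E}_t[V_{t+1}(x^r_{t-1}+a,\tilde W_{t+1})]$ is identical. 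The only thing that can change is the current-period revenue/loss term.

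The second step is to check that this exchange does not decrease the current-period payoff. In the aggregate profit function the inefficiency loss is valued at $C_t$ (when buying) or $P_t$ (when selling), and each unit simultaneously injected and then withdrawn incurs a loss factor $\eta^I$ on the way in and $\eta^W$ on the way out while producing no net storage and no net trade benefit; these are strictly nonpositive contributions (the loss coefficients $\eta^I,\eta^W$ and prices are nonnegative), so cancelling $\min\{p,q\}$ units of each can only increase (weakly) the current payoff. Combining with the unchanged future value, the modified policy is at least as good, which proves that an optimal policy need never do both. I would state the argument for the IP/DP decision structure introduced above, noting that the ``inject'' action and the ``withdraw'' action are mutually exclusive precisely because any overlap can be pruned this way; this also justifies the Cartesian-product action set in \eqref{payoff} using $\{$inject, withdraw, neither$\}$ rather than allowing both.

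The only mild subtlety — and the one place I would be careful — is the pricing of the losses: the text says losses are valued at $C_t$ when buying and $P_t$ when selling, but ``in our experiments we valued all losses at $C_t$,'' and in a period where one both injects and withdraws one must fix which price applies to the cancelled units. Since $P_t\le C_t$ and both are nonnegative, the loss term is a nonnegative multiple of a nonnegative price in either convention, so the cancellation is still weakly improving regardless of the bookkeeping choice; I would simply remark that the conclusion is insensitive to this modeling detail. No heavy computation is needed — the whole proof is the observation that only $a=x^r_t-x^r_{t-1}$ matters downstream, plus monotonicity of the loss term in the redundant flow.
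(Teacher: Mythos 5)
Your cancellation argument is correct. Note that the paper itself states this observation without any proof at all (unlike the preceding observation on buying versus selling, which gets a one-line justification from $P_t\le C_t$), so there is no ``paper's route'' to compare against; your argument supplies the missing justification and is the natural one. The key point you identify is exactly right for the aggregate model and for the IP formulation with separate $w$-variables for injection and withdrawal: the state transition, the balance constraint (\ref{ip7}), and the holding cost all depend only on the net change $a=x^r_t-x^r_{t-1}$, the rate constraints remain satisfied when both flows are reduced, and the loss terms $S\eta^I(\cdot)+S\eta^W(\cdot)$ are nonnegative costs, so subtracting $\min\{p,q\}$ from each flow is weakly improving (strictly so whenever $S>0$ and $\eta^I+\eta^W>0$). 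Your remark that the conclusion is insensitive to whether losses are priced at $C_t$ or $P_t$ is also correct and worth keeping. The only caveat, which you need not fix but could mention, is that in the flow model of Section \ref{model_section} the transition equation credits injections at rate $(1-\eta^I)$ while debiting withdrawals at rate $1$, so the cancellation there is not unit-for-unit; since the observation is invoked in the aggregate/monetary-loss setting, where losses live entirely in the objective and the state depends only on $a$, your argument applies as stated.
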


\begin{lemma}
Given $x^r_t$ Algorithm \ref{actions} computes the optimal values of $x^b_t$ and $x^s_t$.
\end{lemma}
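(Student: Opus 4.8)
The plan is to exploit the fact that, once the storage level $x^r_t$ is fixed, the only remaining freedom in the decision vector is how to split the residual net demand between buying and selling, and that this split is governed by a one‑dimensional linear programme with a monotone objective. First I would fix the decision $x^r_t$ and recall that, by our choice of variables, the previous storage level is $R_t = x^r_{t-1}$ and is part of the state; hence $a := x^r_t - x^r_{t-1}$ is determined, and with it the holding cost $c^h x^r_t$ and the inefficiency loss term $S\eta^I (x^r_t-x^r_{t-1})^+ + S\eta^W (x^r_t-x^r_{t-1})^-$ appearing in $g_t$ are constants. Likewise the rate and capacity constraints accompanying (\ref{flow}) involve only $x^r_t$ and $x^r_{t-1}$, so they are already settled. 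The only decision variables left are $x^b_t, x^s_t \ge 0$, and by the balance equation (\ref{flow}) they must satisfy $x^b_t - x^s_t = (D_t - E_t) - x^r_{t-1} + x^r_t = N_t + a =: q_t$.

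Next I would solve this residual problem. The only part of $g_t(x_t, W_t)$ that still depends on $(x^b_t, x^s_t)$ is $P_t x^s_t - C_t x^b_t$. Eliminating $x^b_t = x^s_t + q_t$ turns the objective into $(P_t - C_t)x^s_t - C_t q_t$, which, since $P_t \le C_t$, is non‑increasing in $x^s_t$ — this is exactly the earlier observation that buying and selling simultaneously is never strictly beneficial. The feasible range is $x^s_t \ge 0$ together with $x^b_t = x^s_t + q_t \ge 0$, i.e. $x^s_t \ge \max\{0, -q_t\}$. So the optimum is $x^s_t = \max\{0, -q_t\}$ and $x^b_t = \max\{0, q_t\}$, with at most one of the two strictly positive. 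I would then verify that this is precisely what Algorithm~\ref{actions} returns — it forms $q_t$ from $x^r_t$, $x^r_{t-1}$ and the realised $D_t, E_t$, and assigns it to $x^b_t$ or to $-x^s_t$ according to its sign — and note that when $D_t, E_t, x^r_{t-1}, x^r_t$ are integral so is $q_t$, so the computed values respect the integrality requirement (\ref{ip9}).

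I do not expect a genuine obstacle; the only thing that needs care is the bookkeeping that shows the problem truly decouples, namely that every term of $g_t$ and every constraint other than the balance equation depends on $x_t$ only through $x^r_t$ (and the fixed state $x^r_{t-1}$), leaving a trivial one‑variable LP once $x^r_t$ is pinned down. A secondary remark worth including is that the valuation convention for the inefficiency losses (all at $C_t$, or at $C_t$ for injection and $P_t$ for withdrawal) is immaterial to this lemma, since those terms contain neither $x^b_t$ nor $x^s_t$ after $x^r_t$ is fixed, and hence do not affect the argmax over the buying/selling split.
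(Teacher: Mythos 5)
Your argument is correct and is exactly the elaboration of what the paper itself dismisses with the one-word proof ``Trivial'': once $x^r_t$ is fixed, the holding and loss terms and all rate/capacity constraints are constants, and the remaining split $x^b_t - x^s_t = q_t := (D_t - E_t) - x^r_{t-1} + x^r_t$ is a one-variable LP whose objective $(P_t - C_t)x^s_t - C_t q_t$ is non-increasing in $x^s_t$ because $P_t \le C_t$, giving $x^s_t = \max\{0,-q_t\}$, $x^b_t = \max\{0,q_t\}$. One point your final ``verify the algorithm returns this'' step would actually surface: in the withdrawal branch ($x^r_t < x^r_{t-1}$) the printed Algorithm~\ref{actions} sets $x^s_t = E_t - D_t + x^r_t - x^r_{t-1}$, whereas consistency with (\ref{flow}) (and with the guard $E_t - D_t \ge x^r_t - x^r_{t-1}$, which guarantees nonnegativity of the correct expression but not of the printed one) requires $x^s_t = E_t - D_t - x^r_t + x^r_{t-1}$, so the lemma holds for the intended algorithm but the displayed pseudocode has a sign typo there.
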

\begin{proof}
 Trivial.
\end{proof}

\begin{minipage}{.9\linewidth}
\begin{algorithm} [H]
\caption{\label{api} Approximate Policy Iteration with Neural Networks (APINN)}
\begin{algorithmic}
\STATE Step 0: Set initial policy $\pi^0$, set $n=1$
\STATE Step 1: Set $m=1$
\STATE Step 2: Select initial battery level $x^r_{t-1}$
\FOR{$i = 1$ to $T-1$}
\STATE Step 3a: sample $W_t$ 
\STATE Step 3b: \emph{evaluation} Apply policy: 
\begin{equation}
(x^r_{t}, x^b_t, x^s_t) = \pi_t(x^r_{t-1}, W_t), C^m_t = C(x^r_{t-1}, W_t),
\end{equation}
\ENDFOR
\STATE Step 4: if $m<M$, $m=m+1$ and return to Step 1.
\STATE Step 5: Approximate value function: 
\begin{equation}
    \tilde{V}^{x,n-1} = \mathcal{NN}^n(x^r_{t-1},x^r_{t}, x^b_t, x^s_t, C^m_t)
\end{equation}
\STATE Step 6: \emph{Improvement}: 
\begin{equation}
    \pi^n_t(x^r_{t-1},W_t) = \underset{x^r_t}{arg max} [C(x^r_{t-1}, W_t) + \tilde{V}^{x,n-1}]
\end{equation}
\STATE Step 7: if $n<N$, $n=n+1$ goto Step 1
\end{algorithmic}
\end{algorithm}
\end{minipage}

With just one decision variable, the policy improvement stage is much simpler: select the best decision by enumerating over all decisions given a state. The main drawback of using non-linear approximations is removed because the policy improvement stage is now a single dimensional optimization rather than a non-linear multi-dimensional problem which is hard to handle. 

A full policy improvement which updates policy for each possible state can still be very time consuming and may even be unnecessary. We take a simulation approach even in the improvement stage. That is, we generate samples of exogenous information and compute optimal decisions for each possible initial battery level.

In our experiments we choose $M=3000$, $T=10$, and $N=10$. We implemented step 2 for initial battery levels 0 to 10. This in total gives an input data size of 300000 for neural networks in step 5. 

\begin{algorithm} [tbh]
\caption{\label{actions} Compute-decisions}
\begin{algorithmic}
\STATE{Input: $E_t$, $D_t$, $x^r_t$, $x^r_{t-1}$}
\IF{$t<T$}
\IF{$x^r_t> x^r_{t-1}$}
\IF{$E_t - D_t \geq x^r_t - x^r_{t-1}$}
\STATE{$x^s_t = E_t - D_t - x^r_t + x^r_{t-1}$; $x^b_t = 0$}
\ENDIF
\IF{$E_t - D_t < x^r_t - x^r_{t-1}$}
\STATE{$x^b_t = D_t - E_t + x^r_t - x^r_{t-1}$; $x^s_t = 0$}
\ENDIF
\ENDIF

\IF{$x^r_t < x^r_{t-1}$}
\IF{$E_t - D_t \geq x^r_t - x^r_{t-1}$}
\STATE{$x^s_t = E_t - D_t + x^r_t - x^r_{t-1}$; $x^b_t = 0$}
\ENDIF
\IF{$E_t - D_t < x^r_t - x^r_{t-1}$}
\STATE{$x^b_t = D_t - E_t + x^r_t - x^r_{t-1}$; $x^s_t = 0$}
\ENDIF
\ENDIF

\IF{$x^r_t = x^r_{t-1}$}
\IF{$E_t - D_t \geq 0$}
\STATE{$x^s_t = E_t - D_t$; $x^b_t = 0$}
\ENDIF
\IF{$E_t - D_t < 0$}
\STATE{$x^b_t = D_t - E_t$; $x^s_t = 0$}
\ENDIF
\ENDIF
\ELSE
\IF{$E_t - D_t \geq x^r_t - x^r_{t-1}$}
\STATE{$x^s_t = E_t - D_t - x^r_t + x^r_{t-1}$; $x^b_t = 0$}
\ENDIF
\IF{$E_t - D_t < x^r_t - x^r_{t-1}$}
\STATE{$x^b_t = D_t - E_t + x^r_t - x^r_{t-1}$; $x^s_t = 0$}
\ENDIF

\ENDIF
\end{algorithmic}
\end{algorithm}

\section{Experimentation}\label{exp_section}
We now discuss the details of our numerical experiments. This section is organized as follows, first we explain the data generated for our experiments and then discuss the parameters chosen for the neural network, then discuss the API parameters and finally we present our numerical results.
\subsection{Data}\label{data_section}
\subsubsection{Exogenous information process}
The exogenous information matrix from which we sample to determine the next exogenous state for our proposed algorithm was generated using the stochastic processes and benchmark probability distributions (particularly, the Discrete Uniform Distribution and the Discrete Pseudonormal Distribution) described in \cite{7010626, salas1}. We skip the exact details of these distributions and refer the reader to \cite{7010626, salas1}.

\subsubsection*{Demand Sampling Process}

The minimum demand ($D_{min}$) and maximum demand ($D_{max}$) were given the values of 1 and 15 respectively. Following \cite{7010626}, the demand is sampled to have a seasonal structure (that usually exists in observed energy demand):
\begin{equation*}
     \hat D_{t} = \lfloor 3- 4sin(\frac{2\pi (t)}{T}) \rfloor +  \epsilon^D_{t}
\end{equation*}
$\epsilon^D_{t}$ is pseudonormally distributed, $\mathcal{PN} (0,2^2)$ and discretized over the set \{0, $\pm1$, $\pm2$\}. The demand for the next time period is then selected using,
\begin{equation}
    D_{t} = \min\{\max\{\hat D_{t }, D_{min}\}, D_{max}\}.
\end{equation}

\subsubsection*{Renewable Energy Source Sampling Process}

The sampling process for the stochastic renewable source used the $1^{st}$-order Markov Chain model, with $E_{min}$ = 1 and $E_{max}$ = 7. The values between $E_{min}$ and $E_{max}$ were then discretized at a level of $\Delta E$ = 1 and used as support for the sampling process. Therefore, the renewable energy for the next time period is given by,
\begin{equation}
    E_{t+1} = \min\{\max\{E_t + \epsilon^E_{t+1}, E_{min}\}, E_{max}\}
\end{equation}
$\epsilon_t^E$ are independent and identical random variables that can be either uniformly distributed over the set \{0, $\pm1$\}  or pseudonormally distributed, $\mathcal{PN}$(0, $3^2$) and discretized over the set \{0, $\pm1$, $\pm2$, ..., $\pm5$ \}.\\

\subsubsection*{Prices Sampling Process}

Since we have two types of prices: Buying Price ($C_t$) and Selling Price ($P_t$), we defined four parameters; $C_{min}$ = 3, $C_{max}$ = 13, $P_{min}$ = 2 and $P_{max}$ = 12, to support the price sampling processes. Two of the three price processes described in \cite{7010626} were also considered using the pseudonormal distribution defined in \cite{salas1}. The same distribution parameters are used for both price types. Hence the formulas described below apply to $C_t$ and $P_t$ even though they have only been defined in terms of $C_t$.\\
The first method considers a $1^{st}$-order Markov Chain Process with  $\epsilon^C_{t+1} \sim \mathcal{PN} (0, 2.5 ^2)$ and discretized over \{0, $\pm1$, $\pm2$, ..., $\pm8$\}.
\begin{equation}
    C_{t+1} = \min\{\max\{C_t + \epsilon^C_{t+1}, C_{min}\}, C_{max}\}
\end{equation}
The second sampling process referred to as the Markov Chain with jumps includes the simulation of price spikes. In this case, $\epsilon^J_{t+1} \sim \mathcal{PN} (0, 50^2)$ and discretized over \{0, $\pm1$, $\pm2$, ..., $\pm40$\}, $u_t \sim \mathcal{U}(0,1)$ and $p = 0.031.$  
\begin{equation}
    C_{t+1} = \min\{\max\{C_t + \epsilon^C_{t+1} + \mathbf{1}_\{u_{t+1} \le p_\}\epsilon^J_{t+1}, C_{min}\}, C_{max}\}
\end{equation}

\subsubsection{Battery storage parameters}

The minimum and maximum battery storage level, $R_{min}$ and $R_{max}$ are set at 0 and 30 respectively. Therefore, $R_t$ can take any value between $[R_{min}, R_{max}]$ discretized at a level of $\Delta R$ = 1. We set the storage rent ($c^h$), injection rate ($\gamma ^ I$) as well as the  withdrawal rate ($\eta ^ W$) as follows: $c^h$ = 0.0005, $\gamma ^I = 6$ and $\gamma ^W$ = 3. Recall that in our model the efficiency losses are accounted for in the objective function using a penalty term. In fact the terms in the profit function $g_t(x_t, W_t)$ concerning losses make it non-linear. This implies that the larger the inefficiency, the larger their effect on the profit function which in turn will require more specialized machine learning technique for approximating value function in Step 5 of Algorithm \ref{api}.  To take this into account we experiment with scenarios of battery efficiencies: scenario high - $\eta ^ I$ = $\eta ^ W$ = 0.05; scenario low - $\eta ^ I$ = $\eta ^ W$ = 0.3. 

\begin{remark}
The choice of bounds for $E_t$, $D_t$ and $R_t$ is driven by the computational times required to train API for 10 policy improvements. 
\end{remark}

\subsection{Approximation architectures}
In order to compare Neural networks (NN) with other approximation architectures, we chose the Multiple Linear Regression (LR) and Support Vector Regression (SVR). Recall that these approximation architectures are used to predict the future contribution $\tilde{V}$ given the set of 5 inputs as the time period $t$, the previous energy storage amount $x_{t-1}$ and the decisions for that time period: energy bought $x_t^b$, energy sold $x_t^s$ and energy stored $x_t^r$ obtained from the approximate policy evaluation phase. We provide a brief description of how these architectures were implemented in Python.\\

\subsubsection*{Multiple Linear Regression (LR)}
This is often used to predict the target/dependent variable as a weighted sum of the input/independent variables since this algorithm makes it easy to estimate, understand and explain the relationship between the dependent and independent variables.
\begin{equation}
    \hat V  = \hat \beta_0  + \hat \beta_1 t + \hat \beta_2 x_{t-1}^r + \hat \beta_3 x_t^b + \hat \beta_4 x_t^s + \hat \beta_5 x_t^r 
\end{equation}

We use the Ordinary least squares Linear Regression model from the scikit-learn library to find the optimized values of the coefficients that minimize the squared differences between the actual and the estimated outcomes of the dependent variables.

\begin{equation}
    \hat \beta = \mbox{arg} \underset{\beta_0, \beta_1,...., \beta_5}{\mbox{min}} \sum_{i = 1}^n \left(y^{(i)} - \left(\ \beta_0 + \sum_{j = 1}^5\beta_j x_j^{(i)} \right)\right)
\end{equation}\\

\subsubsection*{Support Vector Regression (SVR)}
The basic idea is to find a function $f(x)$ also known as the hyperplane which is as flat as possible and also deviates from a set of observed response values $y_n$ by a value no greater than the margin of tolerance $\epsilon$ for each training point $x$ \cite{7010626, Smola2004}. 
\begin{equation}
    f(x) = \langle w, x \rangle + b \mbox{ where } w \in \mathcal{X}, b \in \mathbb{R}
\end{equation}
Flatness of the hyperplane is achieved by minimising the value of $w$. This optimization problem can be represented with the formulation stated in \cite{vapnik1}.

\begin{eqnarray}
\mbox{min} \quad \frac{1}{2} \|w\| + C \sum_{i=1}^{I} (\xi_i + \xi_i^*) \nonumber \\
\mbox{s.t.}
\begin{cases}
y_i - \langle w_i, x_i \rangle - b \leq \epsilon + \xi_i \\
\langle w_i, x_i \rangle + b - y_i \le \epsilon + \xi_i^* \\
\xi_i + \xi_i^* \geq 0 
\end{cases}
\end{eqnarray}

We use the Linear Support Vector Regression model with a linear kernel and not the Epsilon-Support Vector Regression model in the scikit-learn library for our numerical experiments since it is more flexible in the selection of penalties and loss functions and scales better to large samples. We choose the epsilon-insensitive loss function and $\epsilon$ and the penalty parameter $C$ are set to the default values of 0.0 and 1.0 respectively. Note that $C$ determines the trade-off between the number up to which deviations larger than $\epsilon$ are tolerated and the flatness of the hyperplane $f(x)$. The maximum number of iterations was set to 1000 with a tolerance of 1e-05.

\subsubsection*{Neural Network (NN)}
Neural networks can be classified as black box models which are useful in capturing many kinds of dynamic and non-linear relationships and patterns in both structured and unstructured datasets. We used the keras deep learning library to implement our deep neural network. 
We employed a feed forward neural network with 5 nodes in the input layer, 10 nodes in the first hidden layer, 10 nodes in the second hidden layer and a single node in the output layer which is used for our predictions. The nodes mentioned above are of a dense layer type as all nodes in the previous layer are connected to the nodes in the current layer. We also included two dropout layers set to 0.2 that randomly select neurons to be ignored during training. 

\smallskip

The rectified linear unit $(ReLU )$ activation function is used within the first two layers whereas the linear activation function is employed in the output layer for predictions. We used the mean squared logarithmic error as our loss function. The loss function is minimized by backpropagating the current error to the previous layer where it is used to modify the weights and bias through the Adam optimization algorithm. We chose Adam which is shown to work well in practice and compares favorably to other stochastic optimization methods. We used the default settings for the Adam optimizer suggested on the keras documentation for training our model.

\smallskip

We use a batch size of 100 and 15 epochs during the training process of our network. Our data is initially split with 70\% as the training samples and 30\% as the test samples. The network is then trained using the training samples and its parameters are iteratively adjusted until the loss function is minimised. The validation split is then used to randomly select 20\% of the test samples for prediction using the trained neural network and the validation loss is computed after each epoch. This process is continued until the validation loss does not decrease and the optimized weights of the model with the lowest validation loss is selected \cite{yao1}. 


\subsection{Evaluation benchmarks}

We have five problem classes differing mainly in the distributional settings for generating the exogenous information. We summarize these in Table \ref{instance_summary}.\\
We generated 2000 instances for each class to evaluate the performance of the policies from our algorithm. All instances for each class are generated as explained in \autoref{data_section} similar to the way sampling is done in training the policy iteration algorithm. The deterministic optimal policies for each of these instances have been computed using the IP given in \autoref{deter_sec}.

\begin{table}[H]
\centering
\begin{tabular}{*{5}{c}}
\toprule
Data Class &  $\hat E_{t}$ & Price Process & $\hat{C_t}/ \hat{P_t}$ &  \\
\toprule
S1 & $\mathcal{U}(-1,1)$  & MC + jump & $\mathcal{PN}(0,0.5^2)$ \\
S2 & $\mathcal{U}(-1,1)$  & MC + jump & $\mathcal{PN}(0,1.0^2)$ \\
S3 & $\mathcal{U}(-1,1)$  & MC + jump & $\mathcal{PN}(0,2.5^2)$ \\
S4 & $\mathcal{U}(-1,1)$  & MC + jump & $\mathcal{PN}(0,5.0^2)$   \\
S5 & $\mathcal{PN}(0,0.5^2)$ & MC + jump & $\mathcal{PN}(0,5.0^2)$ \\
S6 & $\mathcal{PN}(0,1.0^2)$ & MC + jump & $\mathcal{PN}(0,5.0^2)$ \\
S7 & $\mathcal{PN}(0,1.5^2)$ & MC + jump & $\mathcal{PN}(0,5.0^2)$ \\
S8 & $\mathcal{PN}(0,2.0^2)$ & MC + jump & $\mathcal{PN}(0,5.0^2)$ \\
S9 & $\mathcal{PN}(0,0.5^2)$ & MC + jump & $\mathcal{PN}(0,1.0^2)$ \\
S10 & $\mathcal{PN}(0,1.0^2)$ & MC + jump & $\mathcal{PN}(0,1.0^2)$ \\
S11 & $\mathcal{PN}(0,1.5^2)$ & MC + jump & $\mathcal{PN}(0,1.0^2)$ \\
S12 & $\mathcal{PN}(0,0.5^2)$ & MC & $\mathcal{PN}(0,1.0^2)$ \\
S13 & $\mathcal{PN}(0,1.0^2)$ & MC & $\mathcal{PN}(0,1.0^2)$ \\
\bottomrule
\end{tabular}
\caption{Data Classes}
\label{instance_summary}
\end{table}

\subsection{Metrics}
To evaluate the performance of our policy against the deterministic optimum on the benchmark instances we use the \% optimal metric defined as follows:
\begin{equation}
    \%  \mbox{ optimal} = \frac{\mbox{API Revenue}}{\mbox{Optimal Revenue}} \times 100.
\end{equation}
\subsection{Initial policy}
Policy iteration starts with an initial policy which is improved until it converges to the optimal policy. Our initial policy, which we refer to as \emph{naive policy}, is outlined in Algorithm \ref{naive}. The reason for this choice of naive policy is to see if our approach can learn starting from such a simple and very naive policy.

\begin{minipage}{.90\linewidth}
\begin{algorithm} [H]
\caption{\label{naive} Naive policy}
\begin{algorithmic}
\IF{$t<T$}
\STATE{ $x^s_t = E_t$; $x^b_t = D_t$; $x^r_t = x^r_{t-1}$}
\ELSE
\STATE{$x^s_t = E_t + x^r_{t-1}$; $x^b_t = D_t$; $x^r_t = x^r_{t-1} - \gamma^W$}
\ENDIF
\STATE{Output: $x^b_t, x^s_t, x^r_t$}
\end{algorithmic}
\end{algorithm}
\end{minipage}

\subsection{Numerical findings}
We are now ready to discuss our numerical findings. First we discuss the computation times. We give running times for the high case in Table \ref{comp_times_0.95} and for the low case in Table \ref{comp_times_0.70}.
\begin{table}[H]
\centering
\begin{tabular}{c c c}
    \hline
 NN & LR & SVR \\
   \hline
 9 & 8.8 & 10.5 \\   
\hline
\end{tabular}
\caption{Average computation times across all data classes in hours}
\label{comp_times_0.95}
\end{table}

\begin{table}[H]
\centering
\begin{tabular}{c c c}
    \hline
 NN & LR & SVR \\
   \hline
 8.1 & 8.4 & 7.7 \\   
\hline
\end{tabular}
\caption{Average computation times across all data classes in hours}
\label{comp_times_0.70}
\end{table}

We ran the code using Google colab's  Tesla K80 GPU which has a virtual RAM of about 13GB and disk space of about 350GB. 
Computation times are dominated by neural network prediction times in the improvement stage. This is part of the reason for only using a sample of states at each improvement stage. However, the improvement step can be sped up by parallelizing the improvement step using the fact that the underlying optimization problem is single dimensional. An efficient parallelization strategy may even make exact policy improvement  possible instead of simulation. Training the neural network model can also be time intensive, especially, when scaling up to 100s of times periods. The mean-square-log-error is approximately 1.5 in all rounds. Finally, note that since we do not explore the full state space when applying our policy we apply naive policy when we encounter a state which is not used in the improvement stage.

\smallskip

 Our numerical experiments are summarized in Figures \ref{fig:perf_plots} and \ref{fig:prop_plots}. A number of observations can be made from Figures \ref{fig:perf_plots} and \ref{fig:prop_plots}:
 \begin{itemize}
     \item API with Neural networks (NNs) have consistent performance across all classes, coming close to the best of two approaches, LR and SVR. 
     \item NNs outperform both LR and SVR in low efficiency scenarios, in fact, by a good margin in some classes. We observe that this makes a case in  support of NNs as reaching close to optimality becomes even more important in low efficiency scenario. The reason for better performance of NNs in low scenario compared to high scenario may be ascertained to the non-linearity of $g()$ which becomes more pronounced with low efficiency. NNs do not really outperform other approaches in high efficiency case, understandably, since, with high efficiency $g()$ is almost linear. 
		\item Surprisingly, \% optimality in high efficiency case is (much) lower compared to low case, for all three policies. This seems to indicate high case is harder than low efficiency case. This counter-intuitive behaviour is due to decrease in the optimal profits.  
     \item Between LR and SVR, SVR seems to perform well in both scenarios. This is in line with observation in \cite{7010626} on single price problem. However, SVR also has high variance between classes with very good performance on some and very poor on others. 
     \item The variance in performance, which we illustrate by showing proportion of instances with \% optimality greater than 80\% in each case,  across classes is lowest in NNs, which can be seen from Figure \ref{fig:prop_plots}. Between low and high scenarios NNs performed most consistently with less variance in low scenario. 
     \item Our experiments seem to suggest that NNs are more able to deal with discrete decisions compared to the other two approaches.
     \item We observed that performance of NN policy at initial iterations is worse compared to LR and SVR, but improves at every iteration of policy improvement. However, we observed the improvement in performance after 10 iterations is marginal compared to the time required. LR and SVR behave very similar to each other in terms of policy  improvement but very different from NN. For example, LR and SVR policies improves on average (less than) 10\% going from iteration 1 to iteration 10 for classes S4-S13. We suspect the reason being this to be the Psuedo-Normal distribution of $\hat{E}$.
     \item Finally, all three policies outperform Naive policy by a large margin which achieves less than 55\% optimality on high case and less than 60\% on low case. 
 \end{itemize}

\begin{figure}[tbh]
     \centering
     \begin{subfigure}[b]{0.95\textwidth}
         \centering
         \includegraphics[width=\textwidth]{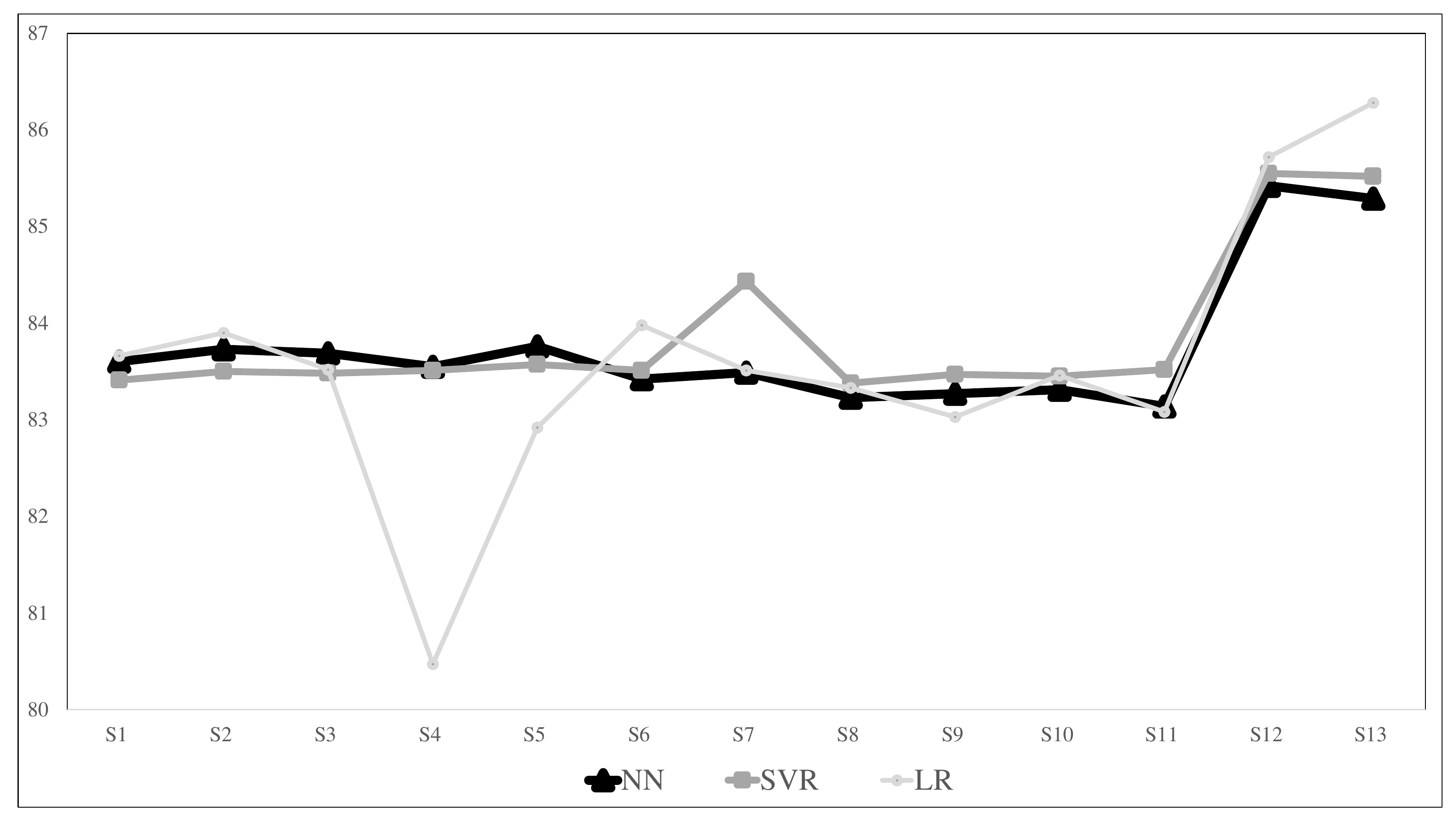}
         \caption{High scenario}
         \label{high_perf_plot}
     \end{subfigure}
     \hfill
     \begin{subfigure}[b]{0.95\textwidth}
         \centering
         \includegraphics[width=\textwidth]{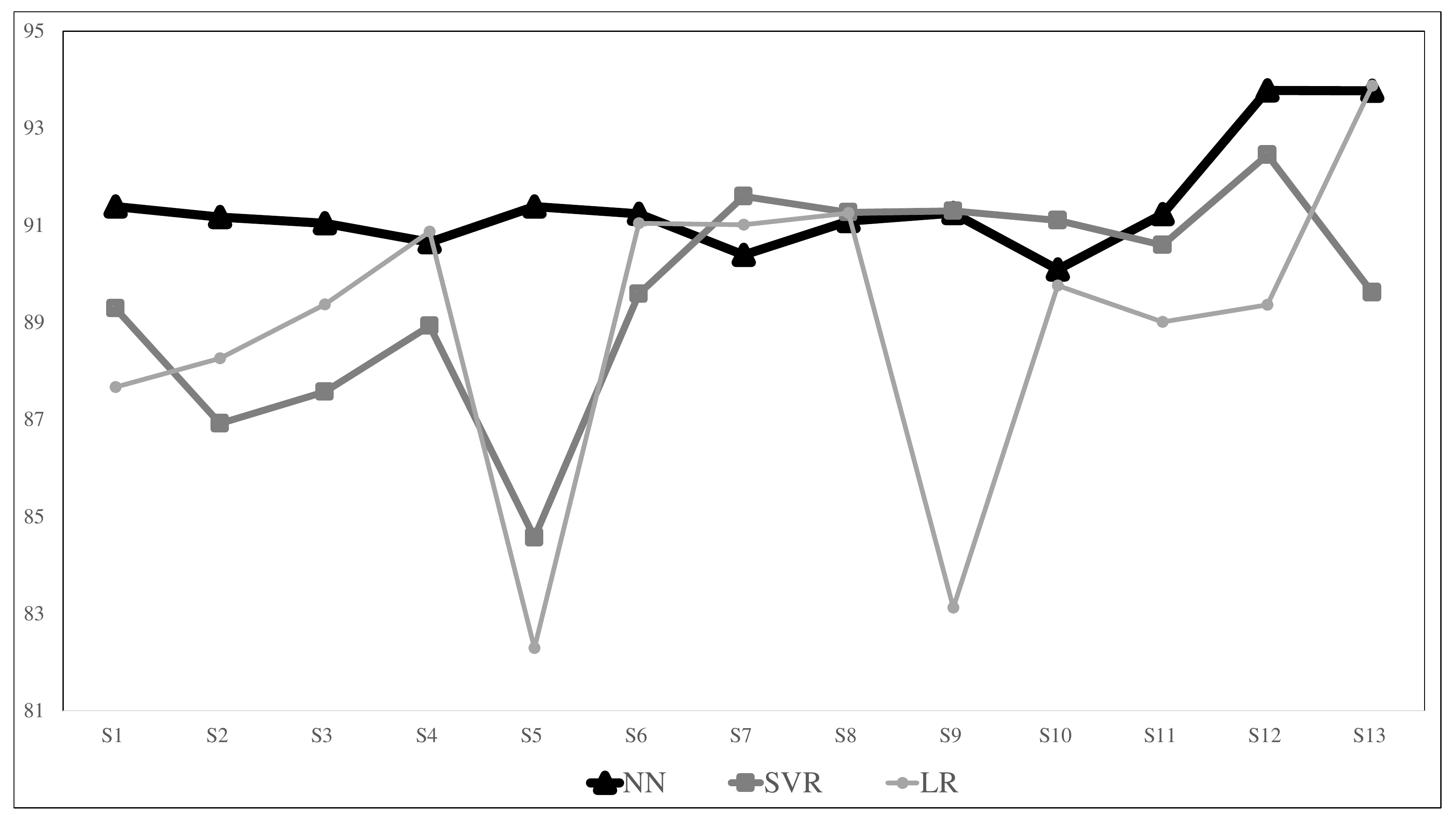}
         \caption{Low scenario}
         \label{low_perf_plot}
     \end{subfigure}
        \caption{Comparison of Average $\%$Optimality between NN, LR and SVR}
        \label{fig:perf_plots}
\end{figure}

\begin{figure}[tbh]
     \centering
     \begin{subfigure}[b]{0.95\textwidth}
         \centering
         \includegraphics[width=\textwidth]{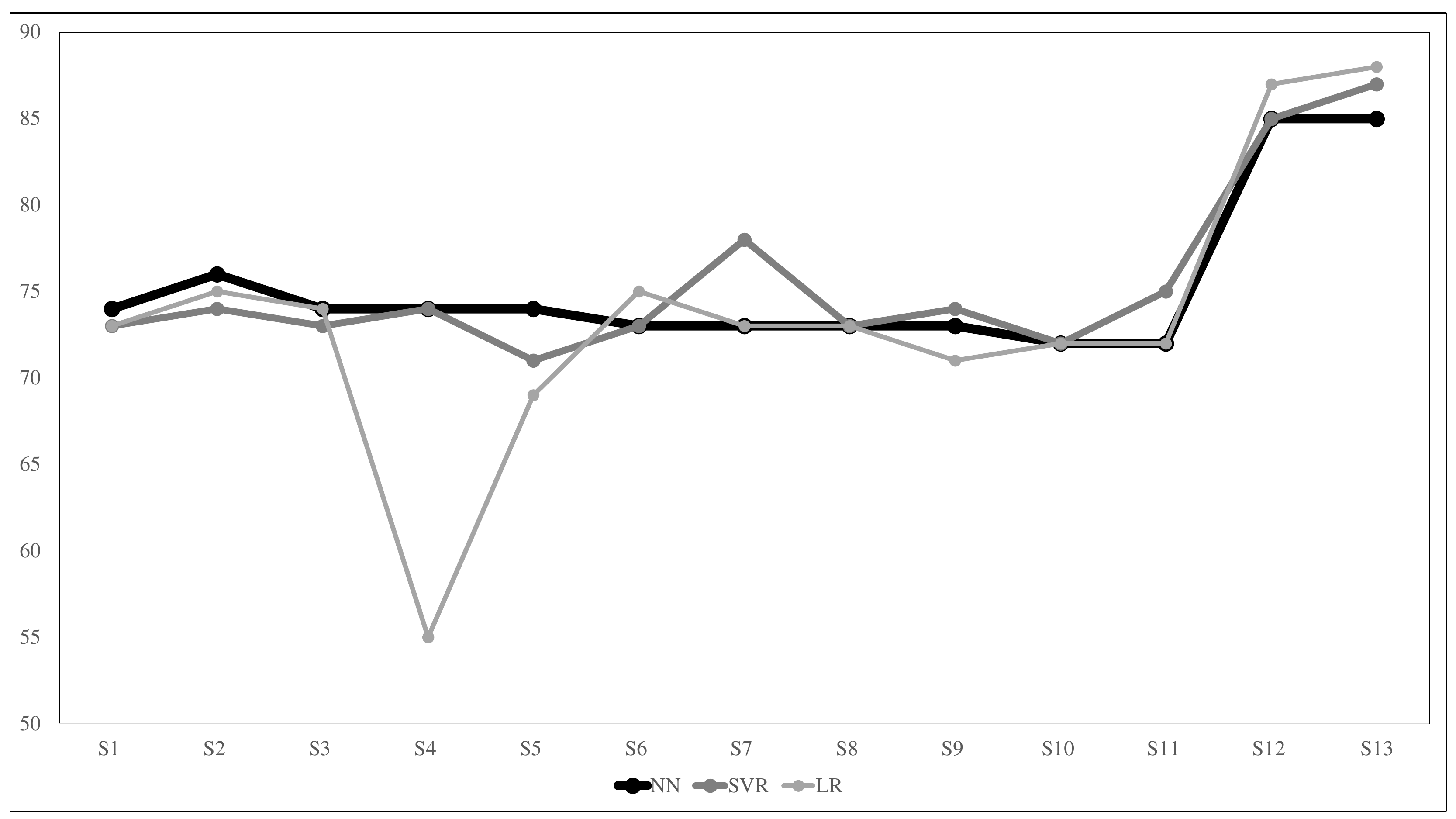}
         \caption{High scenario}
         \label{high_prop_plot}
     \end{subfigure}
     \hfill
     \begin{subfigure}[b]{0.95\textwidth}
         \centering
         \includegraphics[width=\textwidth]{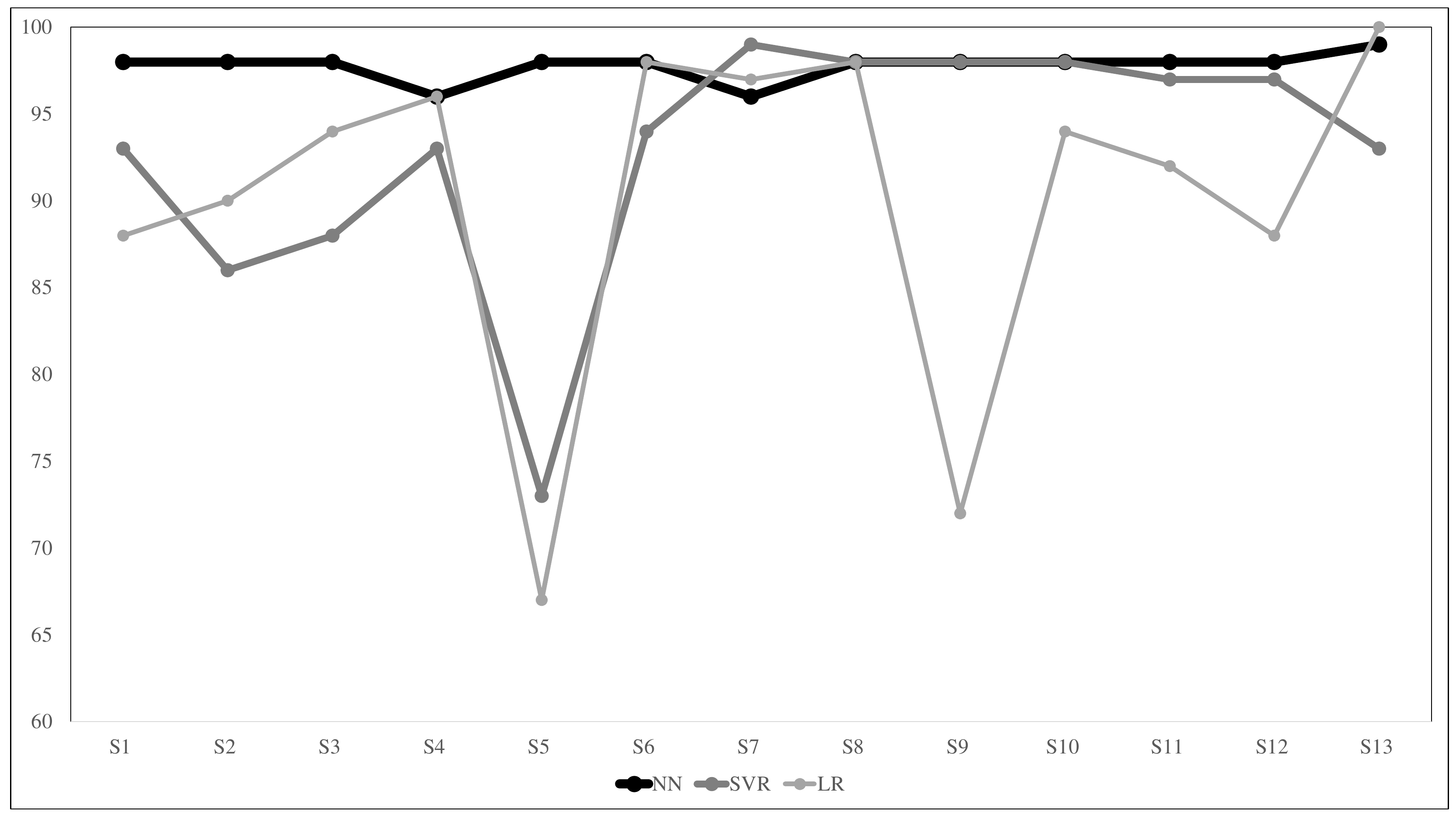}
         \caption{Low scenario}
         \label{low_prop_plot}
     \end{subfigure}
        \caption{Proportion of the 2000 instances with \%Optimality greater than 80\%}
        \label{fig:prop_plots}
\end{figure}

\section{Conclusion}\label{conclude_sec}

We revisit approximate policy iteration methods for solving stochastic SNES by adopting a simpler aggregate model as against commonly used flow model. We show that modeling with aggregate varaibles allows us to use more advanced approximation architectures in policy evaluation stage from ever increasing machine learning armor including the likes of neural networks. We make a case for neural networks by illustrating that approximate policy iteration with neural networks outperform API with support vector regression which was shown to perform best among several others tried in literature. 

\section*{Acknowledgements} 
We thank Denes Csala for initial discussions on this work. Part of Richlove Frimpong's work was done when she was at the Centre for Global Eco-Innovation, Lancaster University and acknowledges the centre's support during this time. 

\bibliographystyle{plain}
\bibliography{References}

\begin{thebibliography}{10}

\bibitem{bertsekas2011a}
Dimitri~P. Bertsekas.
\newblock Approximate policy iteration: A survey and some new methods.
\newblock {\em Journal of Control Theory and Applications}, pages 310--315,
  2011.

\bibitem{bertseka2018}
Dimitri~P. Bertsekas.
\newblock Feature-based aggregation and deep reinforcement learning: A survey
  and some new implementations.
\newblock {\em arXiv:1804.04577}, 2018.

\bibitem{bertsekas_tsitsiklis}
D.P. Bertsekas and J.N. Tsitsiklis.
\newblock Neuro-dynamic programming, athena scientific, belmont, ma.
\newblock 1996.

\bibitem{eyer1}
J.M. Eyer and G.P. Corey.
\newblock Energy storage for the electricity grid: Benefits and market
  potential assessment guide.
\newblock {\em Technical Report SAND2010-0815, Sandia National Laboratories},
  page 69–73, 2010.

\bibitem{eyer2}
J.M. Eyer, J.J. Iannucci, and G.P. Corey.
\newblock Energy storage benefits and market analysis handbook, a study for the
  doe energy storage systems program.
\newblock {\em Technical Report SAND2004-6177, Sandia National Laboratories},
  2004.

\bibitem{7007629}
N.~{Gautam}, Y.~{Xu}, and J.~T. {Bradley}.
\newblock Meeting inelastic demand in systems with storage and renewable
  sources.
\newblock In {\em 2014 IEEE International Conference on Smart Grid
  Communications (SmartGridComm)}, pages 97--102, 2014.

\bibitem{halman1}
N.~Halman, D.~Klabjan, M.~Mostagir, J.~Orlin, and D.~Simchi-Levi.
\newblock A fully polynomial time approximation scheme for single item
  inventory control with discrete demand.
\newblock {\em Mathematics of Operations Research}, 34 (3):674–685, 2009.

\bibitem{nir1}
N.~Halman, G.~Nannicini, and J.~Orlin.
\newblock On the complexity of energy storage problems.
\newblock {\em Discrete Optimization}, 28:31--53, 2018.

\bibitem{Han_et_al2016}
Jiequn Han and E.~Weinan.
\newblock Deep learning approximation for stochastic control problems.
\newblock {\em arXiv:1611.07422}, 2016.

\bibitem{hannah1}
L.~Hannah and D.~Dunson.
\newblock Approximate dynamic programming for storage problems.
\newblock {\em Proceedings of the 29th International Conference on Machine
  Learning}, 2012.

\bibitem{harsha1}
P.~Harsha and M.~Dahleh.
\newblock Optimal management and sizing of energy storage under dynamic pricing
  for the efficient integration of renewable energy.
\newblock {\em IEEE Transactions On Power Systems}, 30 (3):1164–1181, 2015.

\bibitem{7010626}
D.~R. {Jiang}, T.~V. {Pham}, W.~B. {Powell}, D.~F. {Salas}, and W.~R. {Scott}.
\newblock A comparison of approximate dynamic programming techniques on
  benchmark energy storage problems: Does anything work?
\newblock In {\em 2014 IEEE Symposium on Adaptive Dynamic Programming and
  Reinforcement Learning (ADPRL)}, pages 1--8, 2014.

\bibitem{jiang2}
D.R. Jiang and W.B. Powell.
\newblock Optimal hour-ahead bidding in the real-time electricity market with
  battery storage using approximate dynamic programming.
\newblock {\em INFORMS Journal on Computing}, 27 (3):525--543, 2015.

\bibitem{liu1}
D.~Liu and Q.~Wei.
\newblock Policy iteration adaptive dynamic programming algorithm for
  discrete-time nonlinear systems.
\newblock {\em IEEE Transactions On Neural Networks And Learning Systems}, 25
  (3):621--634, 2014.

\bibitem{lohndorf1}
N.~Löhndorf and S.~Minner.
\newblock Optimal day-ahead trading and storage of renewable energies—an
  approximate dynamic programming approach.
\newblock {\em Energy Systems}, 1 (1):61–77, 2010.

\bibitem{moazeni1}
S.~Moazeni, W.B. Powell, and A.H. Hajimiragha.
\newblock Mean-conditional value-at-risk optimal energy storage operation in
  the presence of transaction costs.
\newblock {\em IEEE Transactions On Power Systems}, 30 (3):1222–1232, 2015.

\bibitem{nascimento1}
J.M. Nascimento.
\newblock Approximate dynamic programming for complex storage problems.
\newblock 2008.

\bibitem{nascimento3}
J.M. Nascimento and W.B. Powell.
\newblock Dynamic programming models and algorithms for the mutual fund cash
  balance problem.
\newblock {\em Management Science}, 56 (5):801--815, 2010.

\bibitem{nascimento2}
J.M. Nascimento and W.B. Powell.
\newblock An optimal approximate dynamic programming algorithm for concave,
  scalar storage problems with vector-valued controls.
\newblock {\em IEEE Transactions On Automatic Control}, 58 (12):2995–3010,
  2013.

\bibitem{natarajan1}
G.~Natarajan, Y.~Xu, and J.~Bradley.
\newblock Meeting inelastic demand in systems with storage and renewable
  sources.
\newblock {\em in: Proceedings of the IEEE Fifth International Conference on
  Smart Grid Communications (SmartGridComm)}, pages 97--102, 2014.

\bibitem{ofgem}
Ofgem.
\newblock State of the energy market 2017 report.
\newblock \url
  {https://www.ofgem.gov.uk/system/files/docs/2017/10/state_of_the_market_report_2017_web_1.pdf}.
\newblock [Online; Accessed 05-December-2018].

\bibitem{Pochet_wolsey_book}
Y.~Pochet and L.A. Wolsey.
\newblock Production planning by mixed integer programming. springer, new york.
\newblock 2006.

\bibitem{porteus1}
E.L. Porteus.
\newblock Foundations of stochastic inventory theory.
\newblock {\em Stanford Business Books, Palo Alto, CA}, 2002.

\bibitem{warren1}
W.B. Powell, A.~George, H.~Simão, W.~Scott, A.~Lamont, and J.~Stewart.
\newblock {SMART}: A stochastic multiscale model for the analysis of energy
  resources, technology, and policy.
\newblock {\em INFORMS Journal on Computing}, 24 (4):665–682, 2012.

\bibitem{rempala1}
R.~Rempala.
\newblock Optimal strategy in a trading problem with stochastic prices.
\newblock {\em in: J. Henry, J.-P. Yvon (Eds.), System Modelling and
  Optimization, in: Lecture Notes in Control and Information Sciences}, 197,
  Springer Berlin Heidelberg:560–566, 1994.

\bibitem{salas1}
D.~Salas and W.B. Powell.
\newblock Benchmarking a scalable approximation dynamic programming algorithm
  for stochastic control of multidimensional energy storage problems.
\newblock {\em Technical report, Princeton University}, 2013.

\bibitem{scott1}
W.R. Scott and W.B. Powell.
\newblock Approximate dynamic programming for energy storage with new results
  on instrumental variables and projected bellman errors.
\newblock {\em Technical report, Princeton University}, 2012.

\bibitem{secomandi1}
N.~Secomandi.
\newblock Optimal commodity trading with a capacitated storage asset.
\newblock {\em Management Science}, 56 (3):449–467, 2010.

\bibitem{Smola2004}
Alex~J. Smola and Bernhard Sch{\"o}lkopf.
\newblock A tutorial on support vector regression.
\newblock {\em Statistics and Computing}, 14(3):199--222, Aug 2004.

\bibitem{teleke1}
S.~Teleke, M.E. Baran, S.~Bhattacharya, and A.Q. Huang.
\newblock Rule-based control of battery energy storage for dispatching
  intermittent renewable sources.
\newblock {\em IEEE Transactions On Sustainable Energy}, 1 (3):117–124, 2010.

\bibitem{Tesauro02}
G.~J. Tesauro.
\newblock Programming backgammon using self-teaching neural nets.
\newblock {\em Artificial Intelligence}, 134:181--199, 2002.

\bibitem{vapnik1}
V~Vapnick.
\newblock {\em The Nature of Statistical Learning}.
\newblock Springer, New York, 1995.

\bibitem{xi1}
X.~Xiaomin, R.~Sioshansi, and V.~Marano.
\newblock A stochastic dynamic programming model for co-optimization of
  distributed energy storage.
\newblock {\em Energy Systems}, 5 (3):475–505, 2014.

\bibitem{yao1}
Y.~Yuan, R.~Lorenzo, and C.~Andrea.
\newblock On early stopping in gradient descent learning.
\newblock {\em Constructive Approximation}, 26:289--315, 2007.

\bibitem{zhou2}
Y.~Zhou, A.~Scheller-Wolf, N.~Secomandi, and S.~Smith.
\newblock Electricity trading and negative prices: storage vs. disposal.
\newblock {\em Management Science}, 62 (3):880–898, 2016.

\bibitem{zhou1}
Y.~Zhou, A.~Scheller-Wolf, N.~Secomandi, and S.~Smith.
\newblock Managing wind-based electricity generation in the presence of storage
  and transmission capacity.
\newblock {\em Technical Report 2011-E36, Tepper School of Business, Carnegie
  Mellon University, Available at SSRN: https://ssrn.com/abstract=1962414 or
  http://dx.doi.org/10.2139/ssrn.1962414}, 2018.

\bibitem{zipkin1}
P.H. Zipkin.
\newblock Foundations of inventory management.
\newblock {\em McGraw-Hill, New York, NY}, 2000.

\end{thebibliography}

\end{document}